\newtheorem{theo}{{\bfseries Theorem}}[section]
\newtheorem{prop}[theo]{{\bfseries Proposition}}
\newtheorem{lem}[theo]{{\bfseries Lemma}}
\newtheorem{df}[theo]{{\bfseries Definition}}
\def \N {\mathbb N}
\def \Z {\mathbb Z}
\def \A {\mathcal A}
\def \B {\mathcal B}
\def \CC {\mathcal C}
\def \D {\mathcal D}
\def \O {\mathcal O}
\def \r {\rho}
\def \s {\sigma}
\def \tto {\longrightarrow}
\numberwithin{equation}{section}
\begin{document}

\title[Partition Constructions]{\bfseries  Generalized Intransitive Dice II: \\ Partition Constructions}
\vspace{1cm}
\author{Ethan Akin and Julia Saccamano}
\address{Mathematics Department \\
    The City College \\ 137 Street and Convent Avenue \\
       New York City, NY 10031, USA     }
\email{ethanakin@earthlink.net, julia.saccamano@macaulay.cuny.edu}

\date{May, 2019}

\begin{abstract} A generalized $N$-sided die is a random variable $D$ on a sample space of $N$ equally likely outcomes taking values in
the set of positive integers. We say of independent $N$ sided dice $D_i, D_j$ that $D_i$ beats $D_j$, written
$D_i \to D_j$, if $Prob(D_i > D_j) > \frac{1}{2} $. A collection of dice $\{ D_i : i = 1, \dots, n \}$ models a tournament on the set
$[n] =  \{ 1, 2, \dots, n \}$, i.e. a complete digraph with $n$ vertices, when $D_i \to D_j$ if and only if $i \to j$ in the tournament.
By using $n$-fold partitions of the set $[Nn] $ with each set of size $N$ we can  model an arbitrary tournament on $[n]$.
A bound on the required size of $N$ is obtained by  examples with $N = 3^{n-2}$.
.
\end{abstract}

\keywords{intransitive dice, nontransitive dice, digraph, tournament, partitions, regular partitions,
mimicking a tournament, modeling a tournament}

\thanks{{\em 2010 Mathematical Subject Classification} 05C20, 05C25, 05C62, 05C70}
\vspace{1cm}

\vspace{.5cm} \maketitle

\tableofcontents

\section{Introduction: Tournaments Using Dice}\label{sec1}
\vspace{.5cm}

A generalized die is a cube with each face labeled with a positive number. The possibility of repeated labels is allowed.
On the standard die each of numbers $1,2, \dots 6$ occurs once. We identify the die with the random variable of the
outcome of a roll with each face equally likely. With a pair of dice we assume that the rolls are independent.

Of two dice $D_1$ and $D_2$
we say that $D_1$ beats $D_2$ (written $D_1 \to D_2$) if the probability that
$D_1 > D_2$ is greater than $\frac{1}{2}$
where  $D_1$ and $D_2$ are the independent outcomes of the rolls of the dice.

Of course, with standard dice this does not happen.  If $D_1$ and $D_2$ are standard, then $P(D_1 > D_2) = 15/36$.

If we use the labels
\begin{align}\label{exeq00}
 \begin{split}
A_1 \ &=  \ \{  3 \}, \\
A_2 \ &=  \ \{ 2 \}, \\
A_3 \ &=  \ \{  1\}.
 \end{split}
 \end{align}
 and repeat each label six times to get $6$-sided dice, then clearly always $D_1 \to D_2, D_3$ and $D_2 \to D_3$.

However, there exist  examples of nontransitive dice, or \emph{intransitive dice}, \index{intransitive dice}
three dice $D_1, D_2, D_3$ such that $D_1 \to D_2, D_2 \to D_3,$ and $D_3 \to D_1$. For example, if we let
\begin{align}\label{exeq01}
 \begin{split}
A_1 \ &=  \ \{  3, 5, 7 \}, \\
A_2 \ &=  \ \{ 2, 4, 9 \}, \\
A_3 \ &=  \ \{  1, 6, 8 \}.
 \end{split}
 \end{align}
and repeat each label twice to get $6$-sided dice, $\{ D_1, D_2, D_3 \}$, then $P(D_{i} > D_{i+1}) = \frac{5}{9}$ for $i=1,2,3$ (counting mod $3$).

A \emph{digraph}\index{digraph} $R$ on a set $I$ of size (= cardinality) $|I|$\index{$|I|$} is a set of ordered pairs $(i,j)$  of
distinct elements $i, j \in I$ such that at most one of the pairs $(i,j), (j,i)$ lies in $R$.
The digraph is called a \emph{tournament}\index{tournament} when exactly one of the pairs $(i,j), (j,i)$ lies in $R$.
The name arises because $R$ models the outcomes of a round-robin tournament where every
pair of players competes once with $i$ beating $j$, written $i \to j$, if $(i,j) \in R$. Alternatively, we can
think of $I$ as a list of strategies or actions
so that $i \to j$ when $i$ wins against $j$. The output set $R(i) = \{ j : i \to j \}$ consists of
the elements of $I$ which are beaten by $i$.

Up to relabeling, there are two tournaments of size $3$: the ordering $\{ (1,2),(2,3),(1,3) \}$ and
the $3$-cycle $\{ (1,2),(2,3),(3,1) \}$. The above
examples show that each can be mimicked by using dice.

The $3$-cycle models the game Rock-Paper-Scissors. In general, we will call a tournament $R$ on $I$ a \emph{game} when its size $|I|$ is odd and for
each $i$, $|R(i)| = \frac{1}{2}(|I| - 1)$.  That is, each strategy beats exactly half of its competing strategies and is beaten by the other half.
Clearly, the $3$-cycle is (up to isomorphism) the only game of size $3$.

With size $5$ there is also a game which is unique up to isomorphism. On the television show \emph{The Big Bang Theory} this game was described
as Rock-Paper-Scissors-Lizard-Spock.  It can also be modeled using generalized dice:
 \begin{align}\label{exeq03}
 \begin{split}
 A \ &= \ \{  1, 6, 10, 22, 24, 30 \}, \\
 B \ &= \ \{ 7, 12, 13, 15, 19, 27 \}, \\
 C \ &= \ \{  3, 4, 17, 18, 23, 28 \}, \\
 D \ &= \ \{  2, 9, 11, 16, 26, 29 \}, \\
 E \ &= \ \{  5, 8, 14, 20, 21, 25 \},
 \end{split}
 \end{align}
which satisfy, with probability $19/36$ in each case,
\begin{equation}\label{exeq04}
  A \to C, E; \ B \to A, D; \ C \to B, D; D \to A, E; E \to B, C.
  \end{equation}

We would like to similarly mimic an arbitrary tournament.  However, as the size of the tournament grows we will require larger dice, dice with
more than $6$ ``faces''.
On a sample space of $N$ equally likely outcomes, which we will call the \emph{faces}, an \emph{$N$ sided die}\index{$N$ sided die}
 is a random variable
taking positive integer values. Again a pair of competing $N$-sided dice are assumed independent. See, for example, \cite{CG}.

An $N$ sided die is called \emph{proper} \index{$N$ sided die!proper} when the values are all in $[N] = \{1, \dots, N \}$ and the
sum of the values is $\frac{(N + 1)N}{2}$, or, equivalently, the expected value is $\frac{N + 1}{2}$ which is the same as that of
the \emph{standard} $N$ sided die \index{$N$ sided die!standard} which takes on each value of $[N]$ once.

In \cite{A2} it is shown in various ways that an arbitrary tournament can be modeled by proper $N$ sided dice.

A convenient way of constructing examples is by the use of partitions.

A \emph{partition}\index{partition} $\A $ of a set $I$ is a collection of  disjoint subsets with union $I$. We call it a
\emph{regular  partition} \index{partition!regular} when the cardinalities of the elements of $\A$ are all the same.
 From now on we will assume that our partitions are regular so that
$\A = \{ A_1, \dots A_n \}$ is an $n$ partition of $[Nn] = \{ 1, \dots, Nn \}$ when it is a partition of $[Nn]$ with  $|A_i| = N$ for $i = 1, \dots n$.
There are $(Nn)!/(N!)^n$  $n$ partitions of $[Nn]$.

For a finite subset $A \subset \N$ we denote by $\s(A)$\index{$\s(A)$} the sum of the elements of $A$. If $\A = \{ A_1, \dots A_n \}$ is an
$n$ partition of $[Nn]$, we call the partition \emph{proper}\index{partition!proper} when the sums $\s(A_i)$ are all equal and so
\begin{equation}\label{exeq04a}
\s(A_i) \ = \ \frac{N(Nn + 1)}{2} \quad \text{for} \ i \in [n].
\end{equation}
Equivalently, for each $i$, the expected value of a random element of $A_i$ is $\frac{Nn + 1}{2}$.

For an $n$ partition $\A$ on $[Nn]$ we define the digraph
\begin{equation}\label{exeq05}
R[\A] \ = \ \{ (i,j) \in [n] \times [n]: |\{ (a,b) \in A_i \times A_j : a > b \}| > N^2/2 \}.
\end{equation}
That is, $(i,j) \in R[\A]$ or $A_i \to A_j$ if it is more likely that a randomly chosen element of $A_i$ is greater
than a randomly chosen element of $A_j$ rather
than the reverse.

If $N$ is  odd, then $R[\A]$ is necessarily a tournament on $[n]$.  That is, for every pair $i, j \in [n]$ with $i \not= j$ either
$A_i \to A_j$ or $A_j \to A_i$. Note that for $i = j$, $|\{ (a,b) \in A_i \times A_i : a > b \}| = N(N-1)/2$.

We can use the partition to label the faces of $n$ different $N$ sided dice,   with distinct values selected from $[Nn]$.
If $D_i$ is the random variable associated with the die labeled with values from $A_i$, then
$A_i \to A_j$ exactly when $D_i \to D_j$ in the previous sense. If $\A$ is a proper $n$ partition of $[Nn]$
then repeating each value $n$ times, we obtain $n$ proper $Nn$ sided dice $\{ D_i : i \in [n] \}$ with $D_i \to D_j$ if and only if
$A_i \to A_j$.

Example (\ref{exeq01})
is a proper $3$ partition of $[9]$, and Example (\ref{exeq03}) is a proper $5$ partition of $30$.

We will say that an $n$ partition $\A$ of $[Nn]$ \emph{models} a tournament \index{tournament!model} $R$ on $[n]$ when $R[\A] = R$.

For $N$ large enough we can model any tournament on $[n]$ by using an $n$ partition on $[Nn]$. In \cite{A2} the following is proved.

\begin{theo}\label{maintheopart} If $R$ is a tournament on $[n]$, then there is a positive integer $M$ such that for every integer $N \geq M$,
there exists an $n$ partition of $[Nn]$
$\A = \{ A_1, \dots, A_n \}$
such that for $i, j \in [n]$, $A_i \to A_j$ if and only if $i \to j$ in $R$.  That is, $R = R[\A]$.
\end{theo}
\vspace{.5cm}

However the proof of this theorem and the related results in \cite{A2} are all rather non-constructive.  If we let $N_n$ be the smallest
positive integer $N$ such that every tournament on $[n]$ can be modeled by an $n$ partition on $[Nn]$, then the results of  \cite{A2}
do not provide a bound
on the size $N_n$. In Section \ref{sec3} we provide an explicit construction
which will yield such a bound.  In Theorem \ref{partcor06} below we will show the following

\begin{theo}\label{maintheopart2} If $R$ is a tournament on $[n]$ with $n \geq 2$, then
there exists an $n$ partition of $[3^{n-2}n]$
$\A = \{ A_1, \dots, A_n \}$
such that  $R = R[\A]$. In particular, $N_n \leq 3^{n-2}$.
\end{theo}
\vspace{.5cm}

The bound is probably very crude. Furthermore, the examples constructed are not necessarily proper. On the other hand,
 we will show that for arbitrary positive $n$, there is a game of size $2n+1$ which can be
modeled by a proper $2n+1$ partition of $[3(2n+1)]$.

Notice that  $N_n$ does tend to infinity with $n$. To see this, recall that the number of $n$ partitions of $[Nn]$
is $P_n = (Nn)!/(N!)^n$. Since $\int_{1}^{n+1} \ \ln(x) \ dx > \ln( n!) >  \int_{1}^{n} \ \ln(x) \ dx$,
\begin{align}\label{exeq06}
    \begin{split}
    \ln( P_n) \ \leq \ [(Nn+1)&(\ln(Nn + 1) - 1) + 1] - n[N(\ln(N) - 1) + 1]  \\
     = \ (Nn + 1)&(\ln(n + (1/N)) + \ln(N) - n.
 \end{split}
    \end{align}
On the other hand, the number of tournaments of size $n$ is $T_n = 2^{n(n-1)/2}$ and so $\ln( T_n) = \frac{\ln(2)}{2} n(n-1)$. Because $n^2$ grows faster
than $n \ln(n)$ it follows that $N_n$ cannot remain bounded as $n $ tends to infinity.
\vspace{1cm}

\section{Tournaments and Games}\label{sec2}
\vspace{.5cm}

All the sets we consider are  assumed to be finite.

A \emph{digraph}\index{digraph} on a  nonempty set $I$ is a subset $R \subset I \times I$ such that
$R \cap R^{-1} = \emptyset$ with $R^{-1} = \{ (j,i) : (i,j) \in R \}$.
In particular, $R$ is disjoint from
the diagonal $\Delta =\{(i,i) : i \in I\}$.
 We write $i \to j$ for $(i,j) \in R$.
For a vertex $i$, i.e. $i \in I,$ the \emph{output set}\index{output set} $ R(i) = \{ j : (i,j) \in R \}$ and so $R^{-1}(i) = \{ j : (j,i) \in R \}$
is the \emph{input set}\index{input set}.
If $J \subset I$, then the \emph{restriction}\index{restriction} of $R$ to $J$ is $R|J = R \cap (J\times J)$.

We use $|I|$ to denote the cardinality of a set $I$. Notice that if $I $ is the singleton $ \{ u \}$, then $R = \emptyset$ is the only
digraph on $I$.  We call this the \emph{trivial} digraph and denote it $\emptyset[u]$

Given a map $\r : I \tto J$ we let $\bar \r$\index{$\bar \r$}  denote the product map $\r \times \r : I \times I \tto J \times J$.

\begin{df}\label{gamedef01} Let $R$ and $S$ be digraphs on $I$ and $J$, respectively. A
\emph{morphism} $\r : R \tto S$ is a
map $\r : I \tto J$ such that $(\bar \r)^{-1}(S) = R \setminus (\bar \r)^{-1}(\Delta_J)$.  That is, for $i_1, i_2 \in I$ with
$\r(i_1) \not= \r(i_2)$ $\r(i_1) \to \r(i_2)$ if and only if $i_1 \to i_2$. 
\end{df}

Clearly, if $\r$ is a bijective morphism then $\r^{-1}$ is a morphism and so $\r$ is an \emph{isomorphism}.
Two digraphs are isomorphic when each can be obtained from the
other by relabeling the vertices.

An \emph{automorphism} of $R$ is an
isomorphism with $R = S$.

If $R$ is a digraph on $I$ and $\pi$ is a permutation of $I$, then we let $\pi R$ be the digraph on $I$ given by
\begin{equation}\label{gameeq00aaa}
\pi R \ = \ \bar \pi (R) \ = \ \{ (\pi(i),\pi(j)) : (i,j) \in R \}.
\end{equation}
Clearly, if $R$ and $S$ are digraphs on $I$, then
 $\r : R \tto S$ is an isomorphism if and only if the map $\r$ on $I$ is bijective, i.e. a permutation, and $S = \r R$.

 An $R$ \emph{path}\index{path} $[i_0,\dots,i_n] $ from $i_0$ to $i_n$ (or simply a path when $R$ is understood)  is a
 sequence of elements of $I$ with $n \geq 1$ such that
$(i_k,i_{k+1}) \in R$ for $k = 0,\dots, n-1$. The length of the path is $n$. It is a \emph{closed path} when $i_n = i_0$.
An\emph{ $n$ cycle}\index{cycle}, denoted $\langle i_1, \dots, i_n \rangle$, is
a closed path $[i_n, i_1, \dots, i_n]$ such that the vertices $i_1, \dots, i_n$ are distinct.
A path \emph{spans} $I$ when every $i \in I$ occurs on the path. A spanning cycle is called a \emph{Hamiltonian cycle}\index{cycle!Hamiltonian} for $R$.

A digraph $R$ is called strongly connected, or just \emph{strong}\index{digraph!strong}, if for every pair $i,j$ of distinct elements of $I$
there is a path from $i$ to $j$. It follows that
if $|I| > 1$, then for any $i \in I$ there is a path beginning and ending at $i$. We may eliminate any repeated vertices $j_k = j_{\ell}$ with
$0 < k < \ell$ by removing the portion of the path $j_k, j_{k+1}, \dots, j_{\ell-1}$ and renumbering.  This shows that if $R$ is strong and nontrivial,
there is a cycle through each vertex. The trivial digraph on a singleton is strong vacuously.

A subset $J \subset I$ is \emph{invariant}\index{invariant set} if $i \in J$  implies that the output set $R(i)$ is contained in $J$,
or, equivalently, if any path which begins in $J$ remains in $J$.
It is clear that $R$ is strong if and only if it does not contain any proper invariant subset.

A digraph $R$ is called a \emph{tournament}\index{tournament} when  $R \cup R^{-1} = (I\times I) \setminus \Delta$.
Thus, $R$ is a tournament on $I$ when for each pair of distinct elements $i, j \in I$
either $(i,j)$ or $(j,i)$ lies in $R$ but not both. Clearly, if $R$ is a tournament on $I$ and $J \subset I$, then the
restriction $R|J$ is a tournament on $J$. Harary and Moser provide a nice exposition of tournaments in \cite{HM}.


%
%

\begin{prop}\label{gameprop01} If $R$ is a strong tournament on $I$ with $|I| = p > 1$ and $i \in I$,
then for every $\ell$ with $3 \leq \ell \leq p$
there exists a $\ell$-cycle  in $R$ passing through $i$. In particular, $R$ is a strong, nontrivial tournament if and only if
it admits a Hamiltonian cycle.
\end{prop}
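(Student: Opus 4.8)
The plan is to fix the vertex $i$ and show by induction on $\ell$ that $R$ has an $\ell$-cycle through $i$ for every $\ell$ with $3 \le \ell \le p$; the concluding \emph{in particular} statement then drops out by taking $\ell = p$ and adding an easy converse. For that converse, if $R$ has a Hamiltonian cycle then one can pass from any vertex to any other along it, so $R$ is strong, and since a tournament has no loops and no $2$-cycles (as $R \cap R^{-1} = \emptyset$) every cycle has length at least $3$, so $R$ is nontrivial. In the other direction, a strong tournament with $p > 1$ must in fact have $p \ge 3$ (on two vertices the lone edge admits no return trip), so the case $\ell = p$ of the main assertion furnishes a spanning, hence Hamiltonian, cycle.

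For the base case $\ell = 3$ I would partition $I \setminus \{ i \}$ into the output set $R(i)$ and the input set $R^{-1}(i)$, both of which are nonempty by strongness (otherwise $i$ is a sink or a source). The crucial point is that some edge runs from $R(i)$ to $R^{-1}(i)$: were there none, every out-neighbour of a vertex of $R(i)$ would again lie in $R(i)$, making $R(i)$ a proper invariant set and contradicting strongness. An edge $a \to b$ with $a \in R(i)$ and $b \in R^{-1}(i)$ then closes up to the $3$-cycle $\langle i, a, b \rangle$.

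For the inductive step, assume $i$ lies on an $\ell$-cycle $C = \langle v_0, \dots, v_{\ell-1} \rangle$ with $v_0 = i$ and $\ell < p$, and put $X = I \setminus C \ne \emptyset$. The benign case is that some $x \in X$ is \emph{insertable}: $v_j \to x$ and $x \to v_{j+1}$ for some $j$ (indices mod $\ell$), whereupon splicing $x$ between $v_j$ and $v_{j+1}$ yields an $(\ell+1)$-cycle still through $i$. The main obstacle is the opposite case, where no vertex is insertable. Running around $C$, non-insertability of $x$ forces the implication $v_j \to x \Rightarrow v_{j+1} \to x$, so each $x \in X$ is either beaten by all of $C$ (put it in $W$) or beats all of $C$ (put it in $B$); in this situation a single insertion into $C$ may be genuinely impossible.

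To get past this I would abandon $C$ and assemble a new $(\ell+1)$-cycle that sacrifices one vertex of $C$. Strongness first shows $W$ and $B$ are both nonempty (if $W = \emptyset$ then $C$ is invariant, while if $B = \emptyset$ then no $w \in W$ can reach $C$). Then, taking any $w_0 \in W$ and a path from $w_0$ to $C$, the path remains in $X$ until it first meets $C$, and it enters $C$ from a vertex of $B$; as it begins in $W$ and reaches $B$ within $X = W \cup B$, it contains an edge $w \to b$ with $w \in W$ and $b \in B$. Now $w \to b \to v_0 \to v_1 \to \dots \to v_{\ell-2} \to w$ is the required cycle: $w \to b$ is the edge just found, $b \to v_0$ since $b$ beats all of $C$, the interior edges are those of $C$, and $v_{\ell-2} \to w$ since $C$ beats all of $W$. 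It runs through $\ell+1$ distinct vertices and contains $v_0 = i$, which finishes the induction.
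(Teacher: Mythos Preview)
Your argument is correct and is essentially Moon's classical proof of vertex pancyclicity for strong tournaments. The paper itself does not prove this proposition at all: it simply cites Moon's \emph{Topics on Tournaments}, Harary--Moser, and \cite{A1}. So rather than matching the paper's approach, you have supplied what the paper outsources. Your base case (an edge from $R(i)$ to $R^{-1}(i)$ forced by invariance) and your inductive step (insertion when possible; otherwise the $W$/$B$ dichotomy and the detour $w \to b \to v_0 \to \cdots \to v_{\ell-2} \to w$) are exactly the standard moves, and the bookkeeping---that the dropped vertex is $v_{\ell-1}$ while $v_0 = i$ survives, and that the $W$-to-$B$ transition inside $X$ exists because the last $X$-vertex on a path to $C$ must lie in $B$---is handled cleanly. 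Nothing to fix.
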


\begin{proof} See Moon, \cite{M} Theorem 3 for a proof of this result which is a sharpening of Harary and Moser \cite{HM} Theorem 7. See also
\cite{A1} Proposition 1.2.

\end{proof}\vspace{.5cm}

For $S'$ and $S$  tournaments on $J', J$, respectively, with $J'$ and $J$ disjoint,
the \emph{domination product}\index{domination product} is  the tournament $S' \rhd S$ on $J' \cup J$ defined by:
  \begin{equation}\label{gameeq01}
  S' \rhd S \ = \ S' \cup (J' \times J) \cup S,
  \end{equation}
  so that $J$ is a proper invariant subset for $S' \rhd S$.

  Conversely, if $J$ is a proper invariant subset for a tournament $R$ on $I$ and $J' = I \setminus J$, then
    \begin{equation}\label{gameeq02}
 R \ = \ (R|J') \rhd (R|J).
  \end{equation}

   Let $R$ be a nontrivial tournament on $I$, $v \in I$ and $J = I \setminus \{ v \}$.  The vertex $v$ is called a \emph{maximum}
   when it satisfies the following equivalent conditions
   \begin{itemize}
   \item $v \to u$ for all $u \not= v$ in $I$.
   \item $R(v) = J$
   \item $R^{-1}(v) = \emptyset$.
   \item  $R = (\emptyset[v]) \rhd (R|J)$.
   \end{itemize}
   Similarly, vertex $v$ is a \emph{minimum}
   when it satisfies the following equivalent conditions
   \begin{itemize}
   \item $u \to v$ for all $u \not= v$ in $I$.
   \item $R^{-1}(v) = J$
    \item $R(v) = \emptyset$.
   \item  $R = (R|J) \rhd (\emptyset[v])$.
   \end{itemize}
\vspace{.5cm}

  \begin{prop}\label{gameprop02} Let $R$ be a nontrivial tournament on $I$.

  (a) The tournament $R$ is not strong if and only if it is a domination product, i.e. $ R =  S' \rhd S$ for some tournaments $S$ and $S'$.

  (b) If $v \in I$ with $J = I \setminus \{ v \}$ and $R|J$ is strong, then exactly one of the following is true.
\begin{itemize}
\item[(i)]  The vertex $v$ is a maximum vertex for $R$.
\item[(ii)]  The vertex $v$ is a minimum vertex for $R$.
\item[(iii)] The tournament $R$ is strong.
\end{itemize}
   \end{prop}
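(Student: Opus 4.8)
The plan is to establish part (a) first and then reduce part (b) to it. For (a), both directions are immediate from the material already in place. If $R = S' \rhd S$ with $S'$ and $S$ tournaments on nonempty sets $J'$ and $J_0$, then by the observation following (\ref{gameeq01}) the dominated set $J_0$ is a proper invariant subset of $R$ (nonempty since $S$ is a tournament on a nonempty set, and proper since $J'$ is nonempty), so by the characterization of strength as the absence of proper invariant subsets, $R$ is not strong. Conversely, if $R$ is not strong it contains a nonempty proper invariant subset $J_0$; putting $J' = I \setminus J_0$, formula (\ref{gameeq02}) displays $R = (R|J') \rhd (R|J_0)$ as a domination product. This proves (a).

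For (b), I would first verify that (i), (ii), (iii) are pairwise exclusive. If $v$ is a maximum then, by the last of the equivalent conditions defining a maximum, $R = (\emptyset[v]) \rhd (R|J)$ is a domination product, so $R$ is not strong by part (a); symmetrically a minimum $v$ gives $R = (R|J) \rhd (\emptyset[v])$ and again $R$ is not strong. Thus each of (i) and (ii) excludes (iii). Moreover (i) and (ii) cannot both hold: since $R$ is nontrivial, $J \neq \emptyset$, and for any $u \in J$ a maximum $v$ forces $v \to u$ whereas a minimum $v$ forces $u \to v$, which is impossible in a tournament. Hence at most one of the three alternatives occurs.

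To see that at least one occurs, I would assume $R$ is not strong and produce a maximum or minimum. By part (a), write $R = S' \rhd S$ with $S'$ and $S$ tournaments on nonempty sets $J'$ and $J_0$ partitioning $I$, where $J_0$ is invariant and every vertex of $J'$ beats every vertex of $J_0$. The vertex $v$ lies in exactly one of these sets. If $v \in J'$ and $|J'| \geq 2$, then $J_0 \subseteq J$ remains invariant for $R|J$, is nonempty, and is proper in $J$ (as $J' \setminus \{v\} \neq \emptyset$), so $R|J$ would not be strong, contrary to hypothesis; therefore $J' = \{v\}$, and then $v$ beats every vertex of $J_0 = J$, making $v$ a maximum. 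If instead $v \in J_0$ and $|J_0| \geq 2$, then, since the output set of any vertex of $J_0$ lies in $J_0$, the set $J_0 \setminus \{v\}$ is invariant for $R|J$, nonempty, and proper in $J$ (as $J' \neq \emptyset$), again contradicting the strength of $R|J$; hence $J_0 = \{v\}$, every vertex of $J' = J$ beats $v$, and $v$ is a minimum. Combined with the previous paragraph, exactly one of (i), (ii), (iii) holds.

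The one genuinely delicate point, and the crux of part (b), is the verification just used that deleting $v$ from whichever block of the decomposition contains it leaves a nonempty proper invariant subset of $R|J$; this is exactly what lets the strength of $R|J$ force that block to be the singleton $\{v\}$. The remaining arguments are routine unwinding of the definitions of maximum, minimum, and the domination product.
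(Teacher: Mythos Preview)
Your proof is correct. Part (a) matches the paper's argument. For part (b), however, you take a genuinely different route from the paper. The paper argues directly: since $R|J$ is strong, Proposition~\ref{gameprop01} supplies a Hamiltonian cycle $\langle i_0,\dots,i_p\rangle$ for $R|J$; if both $R(v)$ and $R^{-1}(v)$ are nonempty, one locates consecutive vertices $i_k \to v \to i_{k+1}$ along the cycle and splices $v$ in to obtain a Hamiltonian cycle for $R$, whence $R$ is strong. Your argument instead feeds part (a) back into (b): assuming $R$ is not strong, you take a domination decomposition $J' \cup J_0$ and use the strength of $R|J$ to force whichever block contains $v$ to be the singleton $\{v\}$. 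Your approach is more self-contained, relying only on the invariant-subset characterization of strength and avoiding the appeal to Proposition~\ref{gameprop01} (and hence to Moon's theorem). The paper's approach, on the other hand, is constructive: it actually exhibits a Hamiltonian cycle for $R$, which is a bit more information than merely asserting strength. You also make the mutual exclusivity of (i), (ii), (iii) explicit, which the paper leaves tacit.
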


   \begin{proof} (a) This follows from (\ref{gameeq02}) and the remarks before it.

   (b)    Because $R|J$ is strong, there is a Hamiltonian cycle $\langle i_0, \dots, i_p \rangle$ for $R|J$. If $R^{-1}(v) \not= \emptyset$ we can renumber and
   so assume $i_0 \in R^{-1}(v)$. Let $k$ be the maximum integer such that $i_q \in R^{-1}(v)$ for $0 \leq q \leq k$. If also $R(v) \not= \emptyset$
   then $k < p$ and so $i_{k+1} \in R(v)$. Thus, $\langle i_0, \dots, i_k, v, i_{k+1}, \dots, i_p \rangle$ is a Hamiltonian cycle for $R$ and so $R$ is strong.

\end{proof}\vspace{.5cm}

For a positive  integer $k$, a digraph $R$ is called $k$ \emph{regular}\index{digraph!regular} when  both the input set and the output set of
of every vertex have cardinality $k$.  That is, $|R(i)| = |R^{-1}(i)| = k$ for all $i \in I$. A digraph which is $k$ regular for
some $k$ is called \emph{regular}. If a tournament on $I$ is $k$ regular, then $|I| = 2k + 1$. We will call a regular tournament
 a \emph{game}\index{game} because such a tournament generalizes the Rock-Paper-Scissors game.  Such games are described in \cite{A1}. In particular,
 it is demonstrated there that up to isomorphism there is a unique game of size $5$.

  Of special interest are the \emph{group games}\index{group game} described in Section 3 of \cite{A1}.

  Let $\Z_{2n+1}$ denote the additive group of integers mod $2n+1$ with congruence classes labeled by $0, 1, \dots, 2n$.
  Call $A \subset \Z_{2n+1}$ a \emph{game subset}\index{game subset} if $A \cap -A = \emptyset $ and $\Z_{2n+1} = \{ 0 \} \cup A \cup -A$ where
  $-A = \{ -a : a \in A \}$. In particular, $|A| = n$. The set $\Z_{2n+1} \setminus \{ 0 \}$ is decomposed by the $n$ pairs
  $\{ \{a, -a \} : a \in  \Z_{2n+1} \setminus \{ 0 \} \}$ and a game subset is obtained by choosing one element from each pair.
  In particular, there are $2^n$ game subsets. For example $[n] = \{ 1, 2, \dots, n \}$ is a game subset.

  For any game subset $A$ define the associated game $R[A]$ on $\Z_{2n+1}$ by
  \begin{equation}\label{gameeq03}
  R[A] \ = \ \{ (i,j) : j - i \in A \}.
  \end{equation}
  Since $R[A](i) = i + A, R[A]^{-1}(i) = i - A$ it follows that $R[A]$ is a regular tournament, i.e. a game.
  Furthermore, the translation map $\ell_j$ on $\Z_{2n+1}$ defined by $\ell_j(i) = j + i$ is an automorphism of $R[A]$ for each
  $j \in \Z_{2n+1}$.

  It follows that for every positive integer $n$ there is a game of size $2n+1$. Another way of seeing this is by induction using the
  following construction.

  Let $R$ be a tournament on $I$. With $J \subset I$, let $J' = I \setminus J$. For $u, v$ distinct vertices not in $I$, define the tournament
  $R^+$, called the \emph{extension} of $R$ via $J$ and $u \to v$, by
    \begin{align}\label{gameeq04}
    \begin{split}
  R^+|I \ &= \ R,  \\
  R^+(u) \ &= \  \{ v \} \cup J', \\
  R^+(v) \ &= \ J,
  \end{split}
  \end{align}
  so that $  (R^+)^{-1}(u) = J$ and  $(R^+)^{-1}(v) = \{ u \} \cup J'$.

  If $R$ is a game with $|I| = 2n - 1$ and $|J| = n$, then the extension $R^+$ is a game of size $2n + 1$.

  We conclude the section with the definition of the lexicographic product following the definition in
  \cite{S1} and \cite{S2} for graphs and in \cite{GM} for tournaments, see also \cite{A1} Section 6.

For $R, S$ digraphs on $I, J$, respectively,  the \emph{lexicographic product}\index{lexicographic product}
 $R \ltimes S$ \index{$R \ltimes S$} is a digraph on $I \times J$. For $u, v \in I \times J$ we define $u \to v$ when
 \begin{equation}\label{gameeq06}
u_1 \to v_1 \ \text{in} \ R, \qquad \text{or} \qquad u_1 = v_1 \ \text{and} \ u_2 \to v_2 \ \text{in} \ S.
\end{equation}

It is easy to check that $R \ltimes S$ is a tournament (or a game) if both $R$ and $S$ are tournaments (resp. both are games).

\vspace{1cm}

 \section{Partition Constructions}\label{sec3}
\vspace{.5cm}

Recall that we defined for an $n$ partition $\A$ of $[Nn]$ the digraph
\begin{equation}\label{exeq05aaa}
R[\A] \ = \ \{ (i,j) \in [n] \times [n]: |\{ (a,b) \in A_i \times A_j : a > b \}| > N^2/2 \}.
\end{equation}
If $R[\A]$ is a tournament, e.g. if $N$ is odd, then by permuting $[n]$ or, equivalently, by relabelling the elements of $\A$,
we can obtain every tournament isomorphic to $R[\A]$ as the tournament of an $n$ partition of $[Nn]$.

For two disjoint sets $A, B \subset \N$  we define
\begin{equation}\label{parteq11}
Q(A,B) \ = \ 2 \cdot |\{ (a,b) \in A \times B : a > b \}| - |A| \cdot |B|.
\end{equation}
 \index{$Q(A,B)$} Clearly,
 \begin{equation}\label{parteq11ccc}
 Q(A,B) \ \equiv \ |A| \cdot |B| \quad \text{mod} \ 2.
 \end{equation}

We will write $A \to B$ when $Q(A,B) > 0$. Observe that
\begin{equation}\label{parteq11aaa}
\frac{Q(A,B)}{2 |A| \cdot |B|} \ = \ P( a > b) - \frac{1}{2}
\end{equation}
where $a$ and $b$ are chosen randomly from $A$ and $B$, respectively.  In particular, if
$\A = \{ A_1, \dots, A_n \}$ is an $n$  partition of $[Nn]$, then for the  digraph $R[\A]$ on $[n]$
\begin{equation}\label{parteq16}
\begin{split}
i \to j \quad \Longleftrightarrow \quad Q(A_i,A_j) > 0, \\
\end{split}
\end{equation}

 \vspace{.5cm}

\begin{lem} \label{partlem00} For disjoint  sets $A, B \subset \N$
\begin{equation}\label{parteq11a}
Q(A,B) \ = \ |\{ (a,b) \in A \times B : a > b \}| - |\{ (a,b) \in A \times B : b > a \}|,
\end{equation}
\begin{equation}\label{parteq12}
Q(B,A) \ = \ - \ Q(A,B).
\end{equation}
\end{lem}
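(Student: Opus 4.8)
The plan is to reduce both identities to a single counting fact that is forced by the disjointness hypothesis. First I would introduce shorthand for the two quantities in (\ref{parteq11a}), writing $P = |\{ (a,b) \in A \times B : a > b \}|$ and $P' = |\{ (a,b) \in A \times B : b > a \}|$. With this notation the definition (\ref{parteq11}) reads simply $Q(A,B) = 2P - |A| \cdot |B|$, so the whole lemma becomes a matter of rewriting $|A| \cdot |B|$.

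The key observation, and essentially the only place the hypothesis is used, is that since $A \cap B = \emptyset$ every pair $(a,b) \in A \times B$ has $a \neq b$ and therefore satisfies exactly one of the two strict inequalities $a > b$ or $b > a$. Consequently the two sets counted by $P$ and $P'$ are disjoint and their union is all of $A \times B$, which gives the identity $P + P' = |A| \cdot |B|$. I would state this explicitly, since it is the single nontrivial input: the trichotomy $a > b$, $a = b$, $b > a$ collapses to a dichotomy precisely because disjointness excludes the case $a = b$.

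Granting this, (\ref{parteq11a}) is immediate: substituting $|A| \cdot |B| = P + P'$ into the definition yields $Q(A,B) = 2P - (P + P') = P - P'$, which is exactly the claimed formula. For (\ref{parteq12}) I would apply the formula just proved with the roles of $A$ and $B$ interchanged. The swap $(a,b) \mapsto (b,a)$ is a bijection $A \times B \to B \times A$ under which the condition ``$b > a$'' becomes ``first coordinate exceeds second,'' so the count of pairs in $B \times A$ whose first entry exceeds the second equals $P'$, and the reverse count equals $P$. Hence $Q(B,A) = P' - P = -(P - P') = -Q(A,B)$.

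I do not expect any genuine obstacle here: the argument is a one-line counting identity dressed up in notation. The only point requiring care is bookkeeping the direction of the inequality when the two coordinates are exchanged in the second part, to be sure the $B \times A$ counts are matched with $P'$ and $P$ in the correct order; getting this backwards would flip a sign and break the claim.
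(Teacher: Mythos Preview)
Your proof is correct and follows essentially the same approach as the paper: both hinge on the identity $P + P' = |A|\cdot|B|$ obtained from disjointness, then combine it with the definition $Q(A,B) = 2P - |A|\cdot|B|$. The only cosmetic difference is that the paper reads off (\ref{parteq12}) first and then (\ref{parteq11a}) by subtraction, whereas you derive (\ref{parteq11a}) first and deduce (\ref{parteq12}) from it via the coordinate swap.
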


\begin{proof} Clearly, $|\{ (a,b) \in A \times B : a > b \}| + |\{ (a,b) \in A \times B : b > a \}| = |A| \cdot |B|$ since
$A$ and $B$ are disjoint. From this and (\ref{parteq11}) (\ref{parteq12}) is obvious.
Subtract from the equation $2 |\{ (a,b) \in A \times B : a > b \}| = |A| \cdot |B| + Q(A,B)$,
to get (\ref{parteq11a}).

\end{proof}
 \vspace{.5cm}

If $|A|$ and $|B|$ are odd, then by (\ref{parteq11ccc}) $Q(A,B)$ is odd and so cannot equal zero.

On the other hand, if
$A = \{ a_1 < a_2 \}, B = \{ b_1 < b_2 \} \subset \N$,  then
\begin{align}\label{parteq14}
\begin{split}
\text{Case} (i) \ a_1 < b_1 < b_2 < a_2
 \quad \Longrightarrow &\quad Q(A,B) = 2 \cdot 2 - 4 = 0, \\
\text{Case} (ii) \ b_1 < a_1 < b_2 < a_2 \quad \Longrightarrow &\quad Q(A,B) = 2 \cdot 3 - 4 = 2.
\end{split}
\end{align}
We call Case (i) a \emph{pair inclusion}\index{pair inclusion} which we will write as $B \hookrightarrow A$ \index{$B \hookrightarrow A$}
and Case (ii) a \emph{pair overlap}\index{pair overlap} with $A$ higher which
we will write as $A \twoheadrightarrow B$.\index{$A \twoheadrightarrow B$}

We write $B < A$ when $b < a$ for all $(a,b) \in A \times B$.  In that case, $Q(A,B) \ = |A| \cdot |B|$.
 \vspace{.5cm}

\begin{lem}\label{partlem01} Let $A_1, \dots, A_{k}, B_1, \dots, B_{\ell}$ be pairwise disjoint finite
subsets of $\N$.  With $A = \bigcup_{i=1}^k \  A_i, B = \bigcup_{j=1}^{\ell} \  B_j$
\begin{equation}\label{parteq13}
Q(A,B) \ = \ \sum_{(i,j) \in [k] \times [\ell]} \ Q(A_i,B_j).
\end{equation}

If $k = \ell$, $|A_i| \cdot |B_j| = |A_j| \cdot |B_i|$ for $i,j = 1, \dots, k$ and $A_1 \cup B_1 < A_2 \cup B_2, \dots < A_k \cup B_k$, then
\begin{equation}\label{parteq13a}
Q(A,B) \ = \ \sum_{i \in [k]} \ Q(A_i,B_i).
\end{equation}
\end{lem}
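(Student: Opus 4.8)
The plan is to establish the two displayed identities in turn, deducing (\ref{parteq13a}) from (\ref{parteq13}).

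For (\ref{parteq13}) I would argue by plain additivity of the two counting quantities appearing in the definition (\ref{parteq11}) of $Q$. Since the sets $A_1, \dots, A_k$ are pairwise disjoint with union $A$, and likewise the $B_1, \dots, B_\ell$ have union $B$, the product decomposes as a disjoint union $A \times B = \bigcup_{(i,j) \in [k] \times [\ell]} A_i \times B_j$. Consequently $|\{ (a,b) \in A \times B : a > b \}| = \sum_{i,j} |\{ (a,b) \in A_i \times B_j : a > b \}|$, and similarly $|A| \cdot |B| = (\sum_i |A_i|)(\sum_j |B_j|) = \sum_{i,j} |A_i| \cdot |B_j|$. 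Substituting both sums into (\ref{parteq11}) yields (\ref{parteq13}) at once.

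For (\ref{parteq13a}) I would start from (\ref{parteq13}) with $k = \ell$ and show that the off-diagonal terms cancel, so that only $\sum_i Q(A_i, B_i)$ survives. The ordering hypothesis $A_1 \cup B_1 < \dots < A_k \cup B_k$ makes every off-diagonal block \emph{pure}: if $i < j$, then each element of $A_i$ lies below each element of $B_j$, so no pair $(a,b) \in A_i \times B_j$ satisfies $a > b$ and hence $Q(A_i, B_j) = -|A_i| \cdot |B_j|$; symmetrically, if $i > j$ then every pair has $a > b$ and $Q(A_i, B_j) = +|A_i| \cdot |B_j|$. I would then pair the term indexed $(i,j)$ with the term indexed $(j,i)$ for each $i < j$: their combined contribution is $-|A_i| \cdot |B_j| + |A_j| \cdot |B_i|$, which is exactly $0$ by the balance hypothesis $|A_i| \cdot |B_j| = |A_j| \cdot |B_i|$. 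As $i < j$ ranges over all unordered pairs, these cancellations account for every off-diagonal index, so $\sum_{i \neq j} Q(A_i, B_j) = 0$ and (\ref{parteq13}) collapses to (\ref{parteq13a}).

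The only step requiring any care — the ``main obstacle,'' such as it is — is the bookkeeping of this off-diagonal cancellation: one must read off the correct sign of $Q(A_i, B_j)$ from the block ordering, and then match the $(i,j)$ and $(j,i)$ contributions using the cardinality condition $|A_i| \cdot |B_j| = |A_j| \cdot |B_i|$. Everything else reduces to the additivity already used for (\ref{parteq13}).
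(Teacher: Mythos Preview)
Your proposal is correct and matches the paper's own proof essentially line for line: both arguments obtain (\ref{parteq13}) from the disjoint decomposition $A \times B = \bigcup_{i,j} A_i \times B_j$, and both deduce (\ref{parteq13a}) by computing the off-diagonal values $Q(A_j,B_i) = |A_j|\cdot|B_i| = -Q(A_i,B_j)$ from the block ordering and pairing them to cancel via the hypothesis $|A_i|\cdot|B_j| = |A_j|\cdot|B_i|$.
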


\begin{proof} Every $(a,b) \in A \times B$ is in a unique $A_i \times B_j$. Furthermore,
$|A| \cdot |B| = \sum_{(i,j) \in [k] \times [\ell]} \ |A_i| \cdot |B_j$. These imply (\ref{parteq13}).

If $A_i \cup B_i < A_j \cup B_j$ with $|A_i| \cdot |B_j| =|A_j|\cdot |B_i|$ then
$Q(A_j,B_i) = - Q(A_i,B_j) = |A_j| \cdot |B_i|.$  So (\ref{parteq13a}) follows from (\ref{parteq13}).

\end{proof}
 \vspace{.5cm}

For a triple $A = \{a_1 < a_2 < a_3 \} $  we write $A^- = \{ a_1, a_2 \}$ and $A^+ = \{ a_2, a_3 \}$.

\begin{lem}\label{triplelem} Assume $A = \{a_1 < a_2 < a_3 \} $ and $B = \{b_1 < b_2 < b_3 \} $ are disjoint subsets of $\N$.

(a) If $A^-$ and $B^-$ are inclusion pairs, then $a_3 > b_3 > a_2$ implies $Q(A,B) = 1$. If $A^+$ and $B^+$ are inclusion pairs,
then $b_2 > a_1 > b_1$ implies $Q(A,B) = 1$.

(b) If $A^- \twoheadrightarrow B^-$, then  $a_3 > b_3 > a_2$ implies $Q(A,B) = 3$ and $b_3 > a_3 > b_2$ implies $Q(A,B) = 1$.
If $A^+ \twoheadrightarrow B^+$, then  $b_2 > a_1 > b_1$ implies $Q(A,B) = 3$ and $a_2 > b_1 > a_1$ implies $Q(A,B) = 1$. In particular, in all of
these cases $Q(A,B) > 0$.
\end{lem}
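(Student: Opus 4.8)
The plan is to reduce every case to a single rank count. Since $A$ and $B$ are disjoint, the six values $a_1,a_2,a_3,b_1,b_2,b_3$ are distinct, and by Lemma \ref{partlem00} we have $Q(A,B) = |\{(a,b) \in A\times B : a>b\}| - |\{(a,b): b>a\}|$. Writing $c_i = |\{\, j : b_j < a_i \,\}| \in \{0,1,2,3\}$ for $i=1,2,3$, the element $a_i$ exceeds $c_i$ of the $b_j$ and is exceeded by the remaining $3-c_i$, so it contributes $2c_i-3$ to this difference, giving
\[ Q(A,B) \ = \ \sum_{i=1}^3 (2c_i - 3) \ = \ 2(c_1+c_2+c_3) - 9 . \]
Thus it suffices, in each case, to determine enough of the total order of the six values to read off $c_1+c_2+c_3$, and then substitute.

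Before running the cases I would exploit a symmetry that halves the work. The operation sending $A \mapsto -B$ and $B \mapsto -A$ (negate every value and interchange the two sets) preserves $Q$, since $-b > -a \iff a > b$ yields $Q(-B,-A) = Q(A,B)$. Being order-reversing, it carries the bottom pair of one set to the top pair of the other, so it interchanges the configurations $\{A^-,B^-\}$ and $\{A^+,B^+\}$ while preserving their type (inclusion remains inclusion, overlap remains overlap, with the same ``higher'' set). Tracking the induced inequalities, the condition $a_3 > b_3 > a_2$ of the first assertion of (a) becomes the condition $b_2 > a_1 > b_1$ of the second, and the two conditions $a_3 > b_3 > a_2$ and $b_3 > a_3 > b_2$ of part (b) become $b_2 > a_1 > b_1$ and $a_2 > b_1 > a_1$. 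Hence the second assertion of (a) follows from the first, and the second assertion of (b) from its first, and I need only treat the two statements phrased in terms of $A^-, B^-$.

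For part (a) the hypothesis $a_3 > b_3 > a_2$ forces $b_3$ to be the largest $b$ and to lie between $a_2$ and $a_3$, so $c_3 = 3$. Saying that $A^-$ and $B^-$ form an inclusion pair is ambiguous as to direction, leaving two possibilities: either $a_1 < b_1 < b_2 < a_2$, giving $c_1 = 0$ and $c_2 = 2$, or $b_1 < a_1 < a_2 < b_2$, giving $c_1 = 1$ and $c_2 = 1$. Both yield $c_1+c_2+c_3 = 5$, hence $Q(A,B) = 2\cdot 5 - 9 = 1$. The one point genuinely worth verifying here is precisely this coincidence: the inclusion hypothesis does not fix which pair nests inside the other, yet both orderings produce the same value of $Q$.

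For part (b) the overlap hypothesis $A^- \twoheadrightarrow B^-$ pins down the bottom four values as $b_1 < a_1 < b_2 < a_2$, so $c_1 = 1$ and, since $b_1,b_2 < a_2$, also $c_2 = 2$; it then remains only to locate $a_3$ and $b_3$. If $a_3 > b_3 > a_2$, then all three $b_j$ fall below $a_3$, so $c_3 = 3$ and $Q(A,B) = 2(1+2+3)-9 = 3$; if instead $b_3 > a_3 > b_2$, then $a_3$ exceeds only $b_1,b_2$, so $c_3 = 2$ and $Q(A,B) = 2(1+2+2)-9 = 1$. Together with the symmetry of the second paragraph this settles both parts. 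I expect no conceptual obstacle: the difficulty is purely bookkeeping, namely checking in each case that the stated hypotheses, which constrain only part of the order, still determine $c_1+c_2+c_3$ uniquely.
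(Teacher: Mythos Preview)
Your proof is correct, but it follows a different route from the paper's. The paper's one-line argument invokes (\ref{parteq13a}) and (\ref{parteq14}): one splits $A = A^- \cup \{a_3\}$, $B = B^- \cup \{b_3\}$ (or symmetrically $\{a_1\} \cup A^+$ and $\{b_1\} \cup B^+$), observes that the hypotheses force $A^- \cup B^- < \{a_3,b_3\}$ (resp.\ $\{a_1,b_1\} < A^+ \cup B^+$), and then reads off $Q(A,B) = Q(A^-,B^-) + Q(\{a_3\},\{b_3\})$ as the sum of a pair value from (\ref{parteq14}) (namely $0$ or $2$) and a singleton value $\pm 1$. You instead compute directly via the rank formula $Q(A,B) = 2(c_1+c_2+c_3) - 9$, and you halve the case analysis with the order-reversing involution $(A,B) \mapsto (-B,-A)$. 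The decomposition approach is marginally slicker once Lemma~\ref{partlem01} is available, since each case is literally one addition; your method is more self-contained, and it makes explicit the one point that is genuinely not automatic in part~(a), namely that both directions of nesting in the inclusion pair yield the same $c_1+c_2$. One cosmetic remark: the sets live in $\N$, so strictly speaking you should shift $-A,-B$ by a large constant to stay there, but this does not affect $Q$.
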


\begin{proof}  These are obvious from (\ref{parteq13a}) and (\ref{parteq14}) or direct computation.

\end{proof}
\vspace{.5cm}

 Let $\A = \{ A_1, \dots, A_n \}$ be an $n$ partition of $[Nn]$,
 and $M$ be a positive integer.
Define $\A^* = \{ A^*_1, \dots, A^*_n \}$ and
$  \A^{(M)} = \{ A^{(M)}_1, \dots, A^{(M)}_n \}$,
$n$ partitions of $[Nn]$ and of $[MNn]$, respectively, by
\begin{align}\label{inteq17part}
\begin{split}
A^*_i \ &= \ \{ Nn - a + 1 : a \in \  A_i \}, \\
 A^{(M)}_i \ &= \ \{ Nn(q-1) + a : a \in A_i, q \in [M] \},
\end{split}
\end{align}
for $i \in [n]$.

It is easy to check that, using (\ref{parteq13a}) for the latter,
that for $i,j \in [n]$
\begin{align}\label{parteq13b}
\begin{split}
Q(A^*_i,A^*_j) \  &= \ - Q(A_i,A_j), \\
Q(A^{(M)}_i,A^{(M)}_j) \  &= \ M \cdot Q(A_i,A_j).
\end{split}
\end{align}

So we see that
\begin{equation}\label{parteq10}
R[\A^*] = R[\A]^{-1}, \qquad
R[\A^{(M)}] = R[\A].
\end{equation}

Furthermore, for $i \in [n]$
\begin{align}\label{inteq17partaaa}
\begin{split}
\s(A^*_i) \ &= \ (Nn + 1)N - \s(A_i), \\
\s(A^{(M)}_i) \ &= \ \frac{Nn\cdot M(M-1) \cdot N}{2} \ + \ M \cdot \s(A_i).
\end{split}
\end{align}
It follows that if $\A$ is proper, then $\A^*$ and $\A^{(M)}$ are proper.

 \vspace{.5cm}

 \begin{theo}\label{partcor01a} If $\A = \{ A_1, \dots, A_n \}$ is an $n$ partition of $[Nn]$, then
 there exists $\A'' =  \{ A_1'', \dots, A_n'' \}$
  an $n$ partition of $[(N+2)n]$ with $R[\A''] = R[\A]$. \end{theo}

  \begin{proof} Define $C_i = \{Nn + i, Nn + 2n - i +1 \}$ for $i \in [n]$. If $j > i \in [n]$, then $C_j \hookrightarrow C_i$. So
  (\ref{parteq14}) implies $Q(C_i,C_j) = 0$ for distinct $i, j \in [n]$.
  Let $A_i'' = A_i \cup C_i$. Since $A_i < C_j$ for all $i,j$  (\ref{parteq13a}) implies
  that $Q(A_i'',A_j'') = Q(A_i,A_j) + Q(C_i,C_j) = Q(A_i,A_j)$ for distinct $i, j \in [n]$.

  \end{proof}
 \vspace{.5cm}

 \begin{df}\label{partdef00aaa} For $\A = \{ A_1, \dots, A_n \}$  an $n$ partition of $[Nn]$ and
 $\B = \{ B_1, \dots, B_m \}$  an  $m$ partition of $[Nm]$,  the \emph{domination product}\index{domination product}
 $\B \rhd \A = \{ A_1, \dots, A_{m+n} \}$
 is the $m+n$ partition of $[N(m+n)]$ which extends $\A$ by
  \begin{equation}\label{parteq10aaa}
  A_{n+p} \ = \ \{ Nn + j : j \in B_p \} \quad \text{for} \ p \in [m].
  \end{equation}
  \end{df}
 \vspace{.5cm}

 This is the partition version of the tournament construction given in (\ref{gameeq01}). To be precise:

 \begin{theo}\label{parttheo00bbb} For $\A = \{ A_1, \dots, A_n \}$  an $n$ partition of $[Nn]$ and
 $\B = \{ B_1, \dots, B_m \}$  an $m$ partition of $[Nm]$, let $R[\B]'$ be the tournament on
 $[n+1,n+m] = \{ n+1, \dots, n+m \}$ so that $n+i \to n+j$ in $R[\B]'$ if and only if $i \to j$ in $R[\B]$.
 Thus, $R[\A]$ and $R[\B]'$ are tournaments on disjoint sets. Furthermore,
 $ R[\B \rhd \A]  =  R[\B]' \rhd R[\A]$.
  \end{theo}

  \begin{proof} For $p,q \in [m]$, clearly, $Q(A_{n+p},A_{n+q}) = Q(B_p,B_q)$ and for $p \in [m], q \in [n]$
  $Q(A_{n+p},A_q) = N^2$ since $A_{n+p} > A_q$.

  \end{proof}
 \vspace{.5cm}

\begin{df}\label{partdef00} For $A \subset [M]$ and  $B \subset \N$, not necessarily disjoint, the
\emph{lexicographic product}\index{lexicographic product} $B \ltimes A $ is defined by
\begin{equation}\label{parteq13c}
B \ltimes A \ = \ \{ M(b - 1) + a : b \in B, a \in A \}.
\end{equation}
\end{df}
 \vspace{.5cm}

 The name is adopted because if $a_1, a_2 \in [M]$ then $|a_1 - a_2| < M$ and so for $b_1, b_2 \in \N$ it
follows that
\begin{equation}\label{parteq13d}
(M(b_1 - 1) + a_1) > (M(b_2 - 1) + a_2) \quad \Longleftrightarrow \quad
\begin{cases} \ b_1 > b_2 \ \text{or} \\ b_1 = b_2 \ \text{and} \ a_1 > a_2.\end{cases}
\end{equation}

In particular, we see that $|B \ltimes A| = |B| \cdot |A|$. It is easy to check that
\begin{equation}\label{parteq13ccc}
\s(B \ltimes A) \ = \ M \cdot |A| \cdot (\s(B) - |B|) + |B| \cdot \s(A).
\end{equation}

Notice that the definition requires that we specify $M$.
 \vspace{.5cm}

\begin{lem}\label{partlem01a}(a) If $A_1, A_{2} \subset [M]$, and
 $B_1, B_2$ are disjoint  subsets of $\N$, then $B_1 \ltimes A_1$ and $B_2 \ltimes A_2$ are disjoint with
\begin{equation}\label{parteq13e}
Q(B_1 \ltimes A_1,B_2 \ltimes A_2) \ = \ Q(B_1,B_2) \cdot |A_1|\cdot |A_2|.
\end{equation}

(b) If $A_1, A_{2}$ are  disjoint subsets of $[M]$, and
 $B$ is a subset of $\N$, then $B \ltimes A_1$ and $B \ltimes A_2$ are disjoint with
 \begin{equation}\label{parteq13f}
Q(B \ltimes A_1,B \ltimes A_2) \ = \ Q(A_1,A_2) \cdot |B|.
\end{equation}
\end{lem}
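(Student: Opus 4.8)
The plan is to reduce both identities to the additivity Lemma \ref{partlem01} by decomposing each lexicographic product into the blocks indexed by the outer coordinate $b$. Before that I would dispose of disjointness. Writing an element of $B_i \ltimes A_j$ as $M(b-1) + a$ with $b \in B_i$ and $a \in A_j \subset [M]$, the pair $(b,a)$ is recovered from the integer by the division algorithm since $1 \le a \le M$; hence equality of two such integers forces equality of both the outer and the inner coordinate. In part (a) this would force $b_1 = b_2$ with $b_1 \in B_1$, $b_2 \in B_2$ disjoint, which is impossible; in part (b) it would force $a_1 = a_2$ with $a_1 \in A_1$, $a_2 \in A_2$ disjoint, again impossible. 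So the two products are disjoint in each case.

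For the $Q$ computation I would introduce, for each outer value $b$, the block $C_b^{(j)} = \{ M(b-1) + a : a \in A_j \}$, so that $B_i \ltimes A_j = \bigcup_{b \in B_i} C_b^{(j)}$ and $|C_b^{(j)}| = |A_j|$. Two facts from (\ref{parteq13d}) drive everything: if $b < b'$ then every element of $C_b^{(j)}$ lies below every element of $C_{b'}^{(j')}$, whence $Q(C_{b'}^{(j')}, C_b^{(j)}) = |A_{j'}| \cdot |A_j|$ and, by (\ref{parteq12}), $Q(C_b^{(j)}, C_{b'}^{(j')}) = -|A_j| \cdot |A_{j'}|$; and for a fixed $b$ the comparison $M(b-1)+a_1 > M(b-1)+a_2$ reduces to $a_1 > a_2$, so that a common shift of both arguments leaves $Q$ unchanged and $Q(C_b^{(1)}, C_b^{(2)}) = Q(A_1, A_2)$. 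The blocks appearing are pairwise disjoint in both parts, so Lemma \ref{partlem01} gives $Q = \sum Q(C_b^{(1)}, C_{b'}^{(2)})$, the sum running over the relevant pairs of outer values.

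For part (a) the sum runs over $(b_1, b_2) \in B_1 \times B_2$, and since $B_1, B_2$ are disjoint the case $b_1 = b_2$ never occurs; each term equals $+|A_1| \cdot |A_2|$ when $b_1 > b_2$ and $-|A_1| \cdot |A_2|$ when $b_1 < b_2$. Factoring out $|A_1| \cdot |A_2|$ leaves $|\{ b_1 > b_2 \}| - |\{ b_1 < b_2 \}|$, which is exactly $Q(B_1, B_2)$ by (\ref{parteq11a}), giving (\ref{parteq13e}). For part (b) the sum runs over $(b, b') \in B \times B$. The off-diagonal terms again contribute $\pm |A_1| \cdot |A_2|$ according to the sign of $b - b'$, and since the $\binom{|B|}{2}$ ordered pairs with $b > b'$ are matched one-to-one with those having $b < b'$, these contributions cancel in total. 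Only the diagonal $b = b'$ survives, contributing $Q(A_1, A_2)$ from each of the $|B|$ values of $b$, for a total of $|B| \cdot Q(A_1, A_2)$, which is (\ref{parteq13f}).

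The steps are all routine once the block decomposition is in place; the one point demanding care is the bookkeeping for which blocks are fully comparable versus which collapse to the inner comparison. It is precisely the disjointness hypotheses ($B_1 \cap B_2 = \emptyset$ in (a), $A_1 \cap A_2 = \emptyset$ in (b)) that remove the ambiguous terms and yield the clean factorizations — in (a) by killing the diagonal so that the difference of outer counts reassembles into $Q(B_1,B_2)$, and in (b) by making the diagonal the only surviving contribution after the off-diagonal cancellation.
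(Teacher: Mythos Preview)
Your argument is correct and follows essentially the same route as the paper: block decomposition $B_i \ltimes A_j = \bigcup_b \{b\} \ltimes A_j$ together with the comparability fact (\ref{parteq13d}) and the additivity Lemma~\ref{partlem01}. The only cosmetic difference is that for part~(b) the paper invokes (\ref{parteq13a}) directly (which already packages the off-diagonal cancellation), whereas you expand via (\ref{parteq13}) and cancel the off-diagonal terms by hand; and for part~(a) the paper skips the block decomposition entirely, appealing straight to (\ref{parteq11a}) once it is observed that the comparison reduces to the outer coordinate.
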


\begin{proof} (a) Since $B_1$ and $B_2$ are disjoint, $(M(b_1 - 1) + a_1) > (M(b_2 - 1) + a_2)$ if and only if $b_1 > b_2$.
Hence, (\ref{parteq13e}) easily follows from (\ref{parteq11a}).

 (b) Write $B \ltimes A_p = \bigcup \{ \{ b \} \ltimes A_p : b \in B \}$ for $p = 1,2$.
 From (\ref{parteq13a}) it follows that
   \begin{equation}\label{parteq13h}
   Q(B \ltimes A_1,B \ltimes A_2) \ = \ \sum_{b \in B} Q(\{ b \} \ltimes A_1,\{ b \} \ltimes A_2).
   \end{equation}
   Clearly, $Q(\{ b \} \ltimes A_1,\{ b \} \ltimes A_2) = Q(A_1,A_2)$ for every $b \in B$ and so (\ref{parteq13f}) follows.

 \end{proof}
 \vspace{.5cm}

%
%

 \begin{df}\label{partdef00bbb} For $\A = \{ A_1, \dots, A_n \}$  an $n$ partition of $[Nn]$ and
$\B = \{ B_1, \dots, B_k \}$  a $k$-partition of $[Kk]$,  the \emph{lexicographic product}\index{lexicographic product}
$\B \ltimes \A = \{ C_1, \dots, C_{kn} \}$ is the  $kn$ partition of $[KNkn]$ defined by
 \begin{equation}\label{parteq20}
 C_{n(i-1) + j} \ = \ B_i \ltimes A_j  \quad \text{with} \ \ M = Nn.
 \end{equation}
 \end{df}

 From Lemma \ref{partlem01a} we obtain.

 \begin{lem}\label{partlem02} For $\B \ltimes \A = \{ C_1, \dots, C_{kn} \}$, we have
 \begin{align}
Q(C_{n(i_1-1) + j_1},C_{n(i_2-1) + j_2}) \ &= \ Q(B_{i_1},B_{i_2}) \cdot N^2 \quad \text{when} \ i_1 \not= i_2,\label{parteq21}\\
 Q(C_{n(i-1) + j_1},C_{n(i-1) + j_2}) \ &= \ Q(A_{j_1},A_{j_2}) \cdot K \quad \text{when} \ j_1 \not= j_2.\label{parteq22}
 \end{align}
 \end{lem}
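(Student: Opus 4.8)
The plan is to read off both identities directly from Lemma \ref{partlem01a}, after unwinding the indexing convention of Definition \ref{partdef00bbb}. The entire content of the lemma is that the two cases---varying the $\B$-index versus varying the $\A$-index---correspond exactly to parts (a) and (b) of that earlier lemma, with the parameters supplied by the standing hypotheses: $\A$ is an $n$ partition of $[Nn]$ (so $|A_j| = N$ and each $A_j \subset [Nn] = M$, the modulus used to form the products), while $\B$ is a $k$ partition of $[Kk]$ (so $|B_i| = K$).

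First I would establish (\ref{parteq21}), the case $i_1 \not= i_2$. By definition $C_{n(i_1-1)+j_1} = B_{i_1} \ltimes A_{j_1}$ and $C_{n(i_2-1)+j_2} = B_{i_2} \ltimes A_{j_2}$, both formed with $M = Nn$. Since $i_1 \not= i_2$, the sets $B_{i_1}$ and $B_{i_2}$ are distinct blocks of the partition $\B$, hence disjoint, and each $A_{j_\ell} \subset [Nn] = M$. These are precisely the hypotheses of Lemma \ref{partlem01a}(a), which gives $Q(C_{n(i_1-1)+j_1},C_{n(i_2-1)+j_2}) = Q(B_{i_1},B_{i_2}) \cdot |A_{j_1}| \cdot |A_{j_2}|$. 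Since $|A_{j_1}| = |A_{j_2}| = N$, this equals $Q(B_{i_1},B_{i_2}) \cdot N^2$, as required.

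Next I would treat (\ref{parteq22}), the case of a common $\B$-index $i$ with $j_1 \not= j_2$. Here $C_{n(i-1)+j_1} = B_i \ltimes A_{j_1}$ and $C_{n(i-1)+j_2} = B_i \ltimes A_{j_2}$ share the common factor $B_i \subset \N$, while $A_{j_1}$ and $A_{j_2}$ are distinct, hence disjoint, blocks of $\A$ lying in $[Nn] = M$. This is exactly the setup of Lemma \ref{partlem01a}(b), yielding $Q(C_{n(i-1)+j_1},C_{n(i-1)+j_2}) = Q(A_{j_1},A_{j_2}) \cdot |B_i| = Q(A_{j_1},A_{j_2}) \cdot K$, since $|B_i| = K$.

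There is no genuine obstacle: the argument is pure bookkeeping, and the only points needing care are verifying the disjointness of the relevant blocks (immediate from $\A$ and $\B$ being partitions) and tracking the cardinalities $N$ and $K$ so the scalar factors emerge correctly. I would add one closing remark for completeness: because $(i,j) \mapsto n(i-1)+j$ is a bijection $[k] \times [n] \to [kn]$, any two distinct blocks $C_s, C_t$ fall into exactly one of the two cases (distinct $i$, or equal $i$ forcing distinct $j$), so (\ref{parteq21}) and (\ref{parteq22}) together determine all off-diagonal $Q$-values of $\B \ltimes \A$.
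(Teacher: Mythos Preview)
Your proof is correct and is exactly the argument the paper intends: the authors simply cite Lemma~\ref{partlem01a} without further comment, and your write-up spells out precisely how parts (a) and (b) of that lemma supply (\ref{parteq21}) and (\ref{parteq22}), respectively, once the block sizes $|A_j| = N$ and $|B_i| = K$ are inserted.
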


%
%
%
\vspace{.5cm}

 From this computation we immediately obtain the following theorem. Note that the lexicographic product of proper partitions is proper by (\ref{parteq13ccc}).

 \begin{theo}\label{parttheo03} For $\A $  an $n$ partition of $[Nn]$ and
$\B $  a $k$ partition of $[Kk]$, the  $kn$ partition of $[NKnk]$ $\B \ltimes \A$ satisfies
$R[\B \ltimes \A] = R[\B] \ltimes R[\A]$.  If $\A$ and $\B$ are proper, then $\B \ltimes \A$ is proper. \end{theo}
  \vspace{.5cm}

 By adapting this construction we now show how to extend the digraph of a partition, by inserting an  additional vertex.

 Let $\A = \{ A_1, \dots, A_n \}$ be an $n$ partition of $[Nn]$ and  let $J \subset [n]$ with $J' = [n] \setminus J$. We will show how to obtain
 an $n+1$ partition whose tournament on $[n+1]$ restricts to $R[\A]$ on $[n]$ and with $n+1 \to j$ if and only if $j \in J$.

 First, we consider the extreme cases where $J = [n]$ or $J = \emptyset$, i.e. where $n+1$ is a maximum or minimum vertex.

 Let
 $\O_N = \{ [N] \}$ which we regard as a $1$ partition of $[N] = [N \cdot 1]$. Thus, $R[\O_N] = \emptyset[ [1] ]$, the trivial tournament on
 the singleton $[1]$.

 By Theorem \ref{parttheo00bbb} $\CC =  \O_N \rhd \A$ is an $n+1$ partition of $[N(n+1)]$ which extends $\A$ and which has
 $A_{n+1} \to A_p$ for all $p \in [n]$. This is the case with $J = [n]$.

 On the other hand, if $\CC' = \A \rhd \O_N = \{ C_1', \dots, C_{n+1}' \}$ and we permute, defining $C_p = C_{p+1}'$ for $p \in [n]$ and
 $C_{n+1} = C_1'$, then for $\CC = \{ C_1, \dots, C_n, C_{n+1} \}$, $R[\CC]$ restricts to $R[\A]$ on $[n]$ and
 $C_p \to C_{n+1}$ for all $p \in [n]$. This is the case with $J = \emptyset$.

  For the remaining construction we will use  $\B = \{B_1, B_2, B_3 \}$ with
   \begin{align}\label{parteq17a}
 \begin{split}
 B_1 \ &= \ \{  2, 4, 6 \}, \\
B_2 \ &= \ \{  2, 3, 8 \}, \\
B_3 \ &= \ \{  1, 5, 7 \},
 \end{split}
 \end{align}
 so that $B_1$ and $B_2$ are disjoint from $B_3$, but $B_1 \cap B_2 = \{ 2 \}$.
 Clearly, $B_1^+, B_3^+ \hookrightarrow B_2^+$ and $B_3^+ \twoheadrightarrow B_1^+$.

Using Lemma \ref{triplelem} or computing directly we see that
 \begin{equation}\label{parteq17aa}
 Q(B_2,B_3) \ = \ Q(B_3,B_1) \ = \ 2 \cdot 5 - 9 \ = \ 1.
 \end{equation}

  Now let $J \subset [n]$ with $J$ and $J' = [n] \setminus J$ nonempty.
  Define the  sets $ C_1,\dots,C_{n+1}  \subset [8Nn]$ using a variation of the lexicographic order construction.

  With $M = Nn$ define
     \begin{align}\label{parteq17b}
 \begin{split}
  C_i \ = \ & B_1 \ltimes A_i  \quad \text{for} \ i \in J, \\
  C_i \ = \ & B_2 \ltimes A_i \quad \text{for} \ i \in J', \\
   C_{n+1}& \ = \ B_3 \ltimes [N].
    \end{split}
 \end{align}
 \vspace{.5cm}

  \begin{lem}\label{partlem04} The sets $ C_1, \dots, C_{n+1} $ are pairwise disjoint with $|C_i| = 3N$ for
  $i \in [n+1]$ and  satisfying
 \begin{align}
Q(C_i,C_j) \ &= \ 3 \cdot Q(A_{i},A_j) \ \text{when} \  i,j \in J \ \text{or} \ i,j \in J' \label{parteq17c}\\
Q(C_i,C_j) \ &= \  Q(A_{i},A_j) \ \text{when} \  i \in J \ \text{and} \  j \in J'.\label{parteq17d} \\
Q(C_i,C_{n+1}) \ &= \ N^2 \ \text{when} \ i \in J', \label{parteq17e} \\
Q(C_{n+1},C_i) \ &= \ N^2 \ \text{when} \  i \in J.\label{parteq17f}
 \end{align}
 \end{lem}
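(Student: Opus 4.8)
The plan is to reduce every assertion to the block structure of the lexicographic product together with the additivity of $Q$ from Lemma~\ref{partlem01}, handling the cross case $i\in J,\ j\in J'$ separately, since there $B_1\cap B_2=\{2\}\neq\emptyset$ and neither part of Lemma~\ref{partlem01a} applies verbatim. Throughout I use that with $a\in[M]$ the value $M(b-1)+a$ recovers $b$ and $a$ uniquely (its block $\{M(b-1)+1,\dots,Mb\}$ determines $b$, hence $a$), so two such values agree iff their $b$-components and their $a$-components agree.

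First I would settle the cardinalities and disjointness. Each $C_i$ is a product $B_s\ltimes A_i$ (or $B_3\ltimes[N]$) with $M=Nn$, so $|C_i|=|B_s|\cdot|A_i|=3N$. For indices in the same class the two sets share a factor ($B_1$ on $J$, $B_2$ on $J'$) but have disjoint $A_i,A_j$, so Lemma~\ref{partlem01a}(b) gives $C_i\cap C_j=\emptyset$. The only delicate case is $i\in J,\ j\in J'$, where $C_i=B_1\ltimes A_i$ and $C_j=B_2\ltimes A_j$: a common value forces the shared component $b=b'=2$ and then $a=a'\in A_i\cap A_j=\emptyset$, a contradiction, so these are disjoint as well. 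Finally $C_{n+1}=B_3\ltimes[N]$ is disjoint from every $C_i$ because $B_3$ is disjoint from both $B_1$ and $B_2$ (here Lemma~\ref{partlem01a}(a) applies directly).

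The same-class identity (\ref{parteq17c}) is then immediate from Lemma~\ref{partlem01a}(b): both factors equal $B_1$ when $i,j\in J$ and equal $B_2$ when $i,j\in J'$, and since $|B_1|=|B_2|=3$ this yields $Q(C_i,C_j)=3\,Q(A_i,A_j)$. The two products with $C_{n+1}$, equations (\ref{parteq17e}) and (\ref{parteq17f}), follow from Lemma~\ref{partlem01a}(a), using $B_2\cap B_3=\emptyset$ and $B_1\cap B_3=\emptyset$: for $i\in J'$ I get $Q(C_i,C_{n+1})=Q(B_2,B_3)\cdot|A_i|\cdot|[N]|=Q(B_2,B_3)\,N^2$, and for $i\in J$ I get $Q(C_{n+1},C_i)=Q(B_3,B_1)\,N^2$. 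By (\ref{parteq17aa}) both structural factors equal $1$, giving $N^2$ in each case.

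The main obstacle is the cross relation (\ref{parteq17d}), where the overlap of $B_1$ and $B_2$ blocks a direct appeal to Lemma~\ref{partlem01a}. I would decompose $C_i=\bigcup_{b\in B_1}\{b\}\ltimes A_i$ and $C_j=\bigcup_{b'\in B_2}\{b'\}\ltimes A_j$ and observe that all nine blocks are pairwise disjoint (the sole coincidence in $b$-values is $b=b'=2$, where disjointness again comes from $A_i\cap A_j=\emptyset$), so that (\ref{parteq13}) gives $Q(C_i,C_j)=\sum_{(b,b')\in B_1\times B_2}Q(\{b\}\ltimes A_i,\{b'\}\ltimes A_j)$. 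By (\ref{parteq13d}), a block with $b>b'$ contributes $+|A_i||A_j|=+N^2$, one with $b<b'$ contributes $-N^2$, and the single block with $b=b'=2$ reduces the comparison to that of $A_i$ and $A_j$, contributing $Q(A_i,A_j)$. Hence $Q(C_i,C_j)=Q(A_i,A_j)+N^2\bigl(|\{(b,b')\in B_1\times B_2:b>b'\}|-|\{(b,b')\in B_1\times B_2:b<b'\}|\bigr)$, and a direct tally over $B_1=\{2,4,6\}$ and $B_2=\{2,3,8\}$ shows both off-diagonal counts equal $4$. The $N^2$ term cancels, leaving $Q(C_i,C_j)=Q(A_i,A_j)$. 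The crux is exactly that $B_1$ and $B_2$ are engineered so this signed count of strict order relations vanishes while their single shared value transmits $Q(A_i,A_j)$ with coefficient one.
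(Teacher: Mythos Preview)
Your proof is correct and follows essentially the same route as the paper: Lemma~\ref{partlem01a}(b) for (\ref{parteq17c}), Lemma~\ref{partlem01a}(a) together with (\ref{parteq17aa}) for (\ref{parteq17e}) and (\ref{parteq17f}), and a block decomposition plus additivity (\ref{parteq13}) for the cross case (\ref{parteq17d}). The only cosmetic difference is in (\ref{parteq17d}): the paper writes $B_0=\{2\}$, $B_1=B_0\cup B_1^+$, $B_2=B_0\cup B_2^+$ and uses the pair inclusion $B_1^+\hookrightarrow B_2^+$ to get $Q(B_1^+\ltimes A_i,B_2^+\ltimes A_j)=0$ and $Q(B_1^+\ltimes A_i,B_0\ltimes A_j)=-Q(B_0\ltimes A_i,B_2^+\ltimes A_j)=2N^2$, whereas you decompose all the way to singleton blocks and tally the $4-4$ cancellation directly; both reach the same conclusion.
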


  \begin{proof} Because the sets $B_1$ and $B_2$ are combined with different sets in $\A$, it is clear that
  $ C_1, \dots, C_{n+1} $ are pairwise disjoint.

  Equation (\ref{parteq17c}) follows from (\ref{parteq13f}).

  Equations (\ref{parteq17e}) and (\ref{parteq17f}) follow from (\ref{parteq13e}).

  For (\ref{parteq17d}) we write $B_0 = \{ 2 \}$, so that $B_1 = B_0 \cup B_1^+$ and
  $B_2 = B_0 \cup B_2^+$.  Since $B_1^+ \hookrightarrow B_2^+$ and $B_0 < B_1^+, B_2^+$ it follows that
        \begin{align}\label{parteq17g}
 \begin{split}
Q(B_1^+, B_2^+) \ &= \ 0,  \\
Q(B_1^+, B_0) \ = \ Q(B_2^+, &B_0) \ = \ 2.
     \end{split}
 \end{align}

 So for distinct $i, j \in [n]$, (\ref{parteq13e}) implies
         \begin{align}\label{parteq17ga}
 \begin{split}
Q(B_1^+ \ltimes A_i, B_2^+ \ltimes A_j) \ &= \ 0,  \\
Q(B_1^+ \ltimes A_i , B_0 \ltimes A_j) \ = \ Q(B_2^+ \ltimes A_i, &B_0 \ltimes A_j) \ = \ 2N^2.
     \end{split}
 \end{align}
 From (\ref{parteq13f}) we have
 \begin{equation}\label{parteq17h}
 Q(B_0 \ltimes A_i , B_0 \ltimes A_j) \ = \ Q(A_i,A_j).
 \end{equation}
 Because $B_1 \ltimes A_i = (B_0 \ltimes A_i) \cup (B_1^+ \ltimes A_i) $ and
  $B_2 \ltimes A_j = (B_0 \ltimes A_j) \cup (B_2^+ \ltimes A_j)$, (\ref{parteq13}), (\ref{parteq17ga})
  and (\ref{parteq17h}) imply (\ref{parteq17d}).

  \end{proof}
 \vspace{.5cm}

 Now assume that $ \CC = \{ C_1, \dots, C_{k} \} $ is a collection of pairwise disjoint subsets of $\N$ with $|C_i| = K$ for $i \in [k]$.
 We obtain a $k$-partition of $[Kk]$ $ \ \D = \{ D_1, \dots, D_{k} \} $ by \emph{packing}\index{packing}  $\CC$ as follows.
 Let $m_1,\dots, m_{Kk}$ number in order the points of $\bigcup_{p=1}^k C_p$ so that $m_i > m_j$ if and only if $i > j$. Define
 \begin{equation}\label{parteq17i}
 D_p = \{ i : m_i \in C_p \} \quad \text{for} \ p \in [k].
 \end{equation}
 Since the ordering of the elements is preserved by the renumbering it follows that
 \begin{equation}\label{parteq17j}
 Q(D_p, D_q) \ = \ Q(C_p, C_q) \quad \text{for} \ p, q \in [k].
  \end{equation}
 \vspace{.5cm}

  \begin{theo}\label{parttheo05}  Let $\A = \{ A_1, \dots, A_n \}$ be an $n$ partition of $[Nn]$ with $R[\A]$ a tournament on $[n]$,
  e.g. with $N$ odd. Assume that $R$ is a tournament on $[n+1]$ with restriction $R|[n]$ equal to $R[\A]$. There exists
 $\D = \{ D_1, \dots, D_{n+1} \}$ an $n+1$ partition of $[3N(n+1)]$ with $R[\D] = R$. \end{theo}

  \begin{proof} {\bfseries Case 1}\ (Either $R(n+1) = [n]$ or $R^{-1}(n+1) = [n]$).

  If $R(n+1) = [n]$, then we begin with $\CC =  \O_N \rhd \A$  and let $\D = \CC^{(3)}$, see (\ref{inteq17part}).
  This is an $n+1$ partition of $[3N(n+1)]$ and by (\ref{parteq13b}) $R[\D] = R$.

  Similarly if $R^{-1}(n+1) = [n]$, then we begin with $\CC$ the renumbering given above of $\CC' = \A \rhd \O_N$ and let $\D = \CC^{(3)}$.
 \vspace{.25cm}

 {\bfseries Case 2}\ ($R(n+1) = J \subset [n]$ with $J$ and $J' = [n] \setminus J$ nonempty).

 We apply the construction of (\ref{parteq17b}) to the
 partition $\A$  yielding the disjoint sets $\{  C_1, \dots,  C_{n+1} \}$.
 From Lemma \ref{partlem04} we see that
 $i \to j$ in $R$ if and only if $Q(C_i,C_j) > 0$.

 We obtain $\D$ by packing $\CC = \{ C_1, \dots, C_{n+1} \}$ to obtain an $n+1$ partition of $[3N(n+1)]$.

  \end{proof}
 \vspace{.5cm}

From this follows our main result and, in particular,  Theorem \ref{maintheopart2}.

 \begin{theo}\label{partcor06} For $n \geq 2$, if $R$ is a tournament on $[n]$ and $N$ is any odd integer with $N \geq 3^{n-2}$ or an even
 integer with $N \geq 2 \cdot 3^{n-2}$,
 then there exists an $n$ partition $\A$ of $[Nn]$
 with $R[\A] = R$. \end{theo}

 \begin{proof} We use induction on $n$ to prove that $R$ can be modeled by $\A$ an $n$ partition of $[3^{n-2} \cdot n]$.

 With $n = 2$, $\{ \{ 2 \}, \{ 1 \} \}$ is a $2$ partition of $1 \cdot 2 = 3^{n-2} \cdot n$.
 Reversing the two elements we obtain the other tournament on $[2]$.

 For the inductive step, we apply Theorem \ref{parttheo05}.

 By using $\A^{(2)}$ we obtain an $n$ partition on $[2 \cdot 3^{n-2}n]$ which models $R$.

 If $N = 3^{n-2} + 2m$ or  $N = 2 \cdot 3^{n-2} + 2m$, then the result follows by induction on $m$, using Theorem \ref{partcor01a} for the inductive step.

   \end{proof}

 \vspace{.5cm}

 We illustrate Theorem \ref{parttheo05} by beginning with $\A = \{ A_1, A_2 \} = \{ \{ 2 \}, \{ 1 \} \}$ and using $J = \{ 1 \}$.
 Thus, $N = 1$ and $n = 2$.
\begin{align}\label{parteq17k}
 \begin{split}
C_1 \ &= \ B_1 \ltimes A_1 \ = \ \{  2 + 2, 6 + 2, 10 + 2 \} \ = \ \{  4, 8, 12 \}, \\
C_2 \ &= \ B_2 \ltimes A_2 \ = \ \{  2 + 1, 4 + 1, 14 + 1 \} \ = \ \{  3, 5, 15 \}, \\
C_3 \ &= \ B_3 \ltimes [1] \ = \ \{  0 + 1, 8 + 1, 12 + 1 \} \ = \ \{  1, 9, 13 \}.
 \end{split}
 \end{align}

 Packing we obtain
    \begin{align}\label{parteq17m}
 \begin{split}
D_1 \ &=  \ \{  3, 5, 7 \}, \\
D_2 \ &=  \ \{ 2, 4, 9 \}, \\
D_3 \ &=  \ \{  1, 6, 8 \}.
 \end{split}
 \end{align}
 This is  example (\ref{exeq01}) with which we began.

 On the other hand, the exponential growth in Corollary \ref{partcor06} provides what is probably only a crude upper bound.
 For example, it shows that tournaments on $[5]$ can be modeled using $5$ partitions on $[27 \cdot 5] = [135]$.  The  example
 (\ref{exeq03})
 models the Rock-Paper-Scissors-Lizard-Spock tournament as a $5$ partition of $[30]$ and we will see in the next section that we can do even better.

We can mimic for partitions the extension construction of (\ref{gameeq04})  by using the following list.

 Define $\B = \{B_1, B_2, B_3, B_4 \}$ with
   \begin{align}\label{parteq18a}
 \begin{split}
 B_0 \ &= \ \{ 10 \}, \\
 B_1 \ &= \ \{  2, 7, 10 \}, \\
B_2 \ &= \ \{  4, 5, 10 \}, \\
B_3 \ &= \ \{  3, 8, 9 \}, \\
B_4 \ &= \ \{  1, 6, 11 \}.
 \end{split}
 \end{align}
Clearly: $B^-_2 \hookrightarrow B^-_1, B^-_3, B^-_4$, $B^-_3 \twoheadrightarrow B^-_1, B^-_4$ and $B^-_1 \twoheadrightarrow  B^-_4$.
From Lemma \ref{triplelem} we have

   \begin{align}\label{parteq18aa}
    \begin{split}
 Q(B_2,B_3)  \ = \  Q(B_3,&B_4) \  = \ Q(B_4,B_2) \ = \\ Q(B_3,B_1) \  =  \ Q(&B_1,B_4)  \  = \ 1, \\
 Q(B_1^-,B_2^-)  \ &= \ 0, \\
  Q(B_0, B_1^-) \ = \   &Q(B_0, B_2^-) \  = \ 2.
 \end{split}
 \end{align}

 Let $\A = \{ A_1, \dots, A_n \}$ be an $n$ partition of $[Nn]$ and let $J \subset [n]$ with $J' = [n] \setminus J$ with neither empty.
With $M = Nn$ define
     \begin{align}\label{parteq18bbb}
 \begin{split}
  C_i \ = \ & B_1 \ltimes A_i  \quad \text{for} \ i \in J',\\
  C_j \ = \ & B_2 \ltimes A_j \quad \text{for} \ j \in J, \\
 \bar U & \ = \ B_3 \ltimes [N], \\
 \bar V  & \ = \ B_4 \ltimes [N].
    \end{split}
 \end{align}

  \begin{lem}\label{partlem07aa} The sets $ C_1, \dots, C_{n}, \bar U, \bar V $ are pairwise disjoint with $|\bar U| = |\bar V| = |C_i| = 3N$ for
  $i \in [n]$ and they satisfy
 \begin{align}
Q(C_i,C_j) \ &= \ 3 \cdot Q(A_{i},A_j) \ \text{when} \  i,j \in J \ \text{or} \  i,j \in J' \label{parteq18c}\\
Q(C_i,C_j) \ &= \  Q(A_{i},A_j) \ \text{when} \ j \in J,  \ i \in J'.\label{parteq18d} \\
Q(C_j,\bar U) \ &= \ Q(\bar U,\bar V)\ = \  Q(\bar U,C_i)\ = \ N^2 \ \text{when} \ j \in J, \  i \in J'. \label{parteq18e} \\
Q(\bar V,C_j) \ &=  \ Q(C_i,\bar V)\ = \ N^2 \ \text{when} \ j \in J, \  i \in J'.\label{parteq18f}
 \end{align}
 \end{lem}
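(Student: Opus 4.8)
The plan is to imitate the proof of Lemma~\ref{partlem04}, obtaining all four assertions from the multiplicative behavior of $Q$ under lexicographic products recorded in Lemma~\ref{partlem01a}, fed by the base values for the triples $B_p$ collected in (\ref{parteq18aa}).

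First I would settle cardinality and disjointness. Each of the sets $C_i$, $\bar U$, $\bar V$ has the shape $B_p \ltimes S$ with $|B_p| = 3$ and $|S| = N$, so all of them have size $3N$. For disjointness, $B_3$ and $B_4$ are disjoint from one another and from $B_1$ and $B_2$, so by Lemma~\ref{partlem01a}(a) the sets $\bar U, \bar V$ are disjoint from each other and from every $C_i$; two sets $B_1 \ltimes A_i$, or two sets $B_2 \ltimes A_j$, are disjoint by Lemma~\ref{partlem01a}(b) since distinct blocks $A_i, A_j$ are disjoint; and although $B_1 \cap B_2 = \{10\} = B_0 \neq \emptyset$, an equality $M(b-1)+a = M(b'-1)+a'$ with $a,a' \in [M]$ would force $b = b' \in B_1 \cap B_2$ and $a = a' \in A_i \cap A_j = \emptyset$, so $B_1 \ltimes A_i$ and $B_2 \ltimes A_j$ with $i \neq j$ are disjoint too.

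Next I would read off the three routine identities. Equation (\ref{parteq18c}) is immediate from Lemma~\ref{partlem01a}(b): in both cases $C_i, C_j$ are built from a common triple ($B_1$ on $J'$ or $B_2$ on $J$), so $Q(C_i, C_j) = Q(A_i, A_j) \cdot |B_p| = 3\,Q(A_i, A_j)$. The five equalities in (\ref{parteq18e}) and (\ref{parteq18f}) all follow from Lemma~\ref{partlem01a}(a): each pairs two sets built from distinct triples, so $Q$ reduces to $Q(B_p, B_q) \cdot N^2$, and substituting the values $Q(B_2,B_3) = Q(B_3,B_4) = Q(B_4,B_2) = Q(B_3,B_1) = Q(B_1,B_4) = 1$ from (\ref{parteq18aa}) gives $N^2$ in every case.

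The one genuinely nontrivial point is (\ref{parteq18d}), precisely because there $C_i = B_1 \ltimes A_i$ and $C_j = B_2 \ltimes A_j$ are built from the \emph{overlapping} triples $B_1$ and $B_2$, so Lemma~\ref{partlem01a} cannot be applied directly. Here I would reuse the splitting device from the proof of Lemma~\ref{partlem04}: write $B_1 = B_1^- \cup B_0$ and $B_2 = B_2^- \cup B_0$ with $B_0 = \{10\}$, so that $B_1 \ltimes A_i = (B_1^- \ltimes A_i) \cup (B_0 \ltimes A_i)$ and likewise for $B_2 \ltimes A_j$, and then expand $Q(C_i, C_j)$ into four summands by additivity (\ref{parteq13}). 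Applying (\ref{parteq13e}) with the base values $Q(B_1^-, B_2^-) = 0$ and $Q(B_0, B_1^-) = Q(B_0, B_2^-) = 2$ (whence $Q(B_1^-, B_0) = -2$ by (\ref{parteq12})), the three mixed summands contribute $0$, $-2N^2$, and $+2N^2$, while the remaining summand $Q(B_0 \ltimes A_i, B_0 \ltimes A_j)$ equals $Q(A_i, A_j)\cdot|B_0| = Q(A_i, A_j)$ by (\ref{parteq13f}). The two nonzero mixed terms cancel, leaving $Q(C_i, C_j) = Q(A_i, A_j)$, which is (\ref{parteq18d}). The only care needed is keeping track of which lexicographic factor carries $A_i$ and which carries $A_j$, so that the two $\pm 2N^2$ contributions come out with opposite signs.
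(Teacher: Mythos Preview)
Your proposal is correct and follows exactly the route the paper intends: the paper's own proof is the one-liner ``Using Lemmas \ref{partlem01} and \ref{partlem01a} with (\ref{parteq18aa}), the proof proceeds just as in Lemma \ref{partlem04},'' and you have faithfully unpacked that, including the analogue of the splitting trick (here with $B_0 = \{10\}$ and $B_p^-$ in place of the $B_0 = \{2\}$ and $B_p^+$ used in Lemma~\ref{partlem04}). The sign bookkeeping in your handling of (\ref{parteq18d}) is correct, and the rest is a direct application of Lemma~\ref{partlem01a}.
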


 \begin{proof} Using Lemmas \ref{partlem01} and \ref{partlem01a} with (\ref{parteq18aa}), the proof proceeds just as in Lemma \ref{partlem04}.

 \end{proof} \vspace{.5cm}

  \begin{theo}\label{parttheo08aa}  Let $\A = \{ A_1, \dots, A_n \}$ be an $n$ partition of $[Nn]$ with $R[\A]$ a tournament on $[n]$,
  and let $J \subset [n]$ and $J' = [n] \setminus J$ with neither empty. There exists
 $\D = \{ D_1, \dots, D_{n}, U, V \}$ an $n+2$ partition of $[3N(n+2)]$ with $R[\D] = R^+$ the extension of $R$ via $J$ and $U \to V$. \end{theo}

\begin{proof} Use $\CC = \{ C_1, \dots, C_n , \bar U, \bar V \}$ from Lemma \ref{partlem07aa} and then pack to obtain $\D$.

\end{proof}
\vspace{1cm}

 \section{Examples}\label{sec4}
\vspace{.5cm}

We saw at the end of Section \ref{sec1} that for any $N$ there exist, for $n$ sufficiently large, tournaments $R$ on $[n]$ which cannot be
modeled using an $n$ partition of $[Nn]$.

\begin{prop}\label{cycprop01} For $N$ a positive integer, let $R$ be a tournament on $[n]$ which cannot be modeled using an $n$ partition of $[Nn]$.
If $n$ is the minimum size of such an unobtainable tournament, then $R$ is a strong tournament. \end{prop}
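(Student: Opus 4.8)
The plan is to argue by contradiction, exploiting the partition domination product of Theorem \ref{parttheo00bbb} together with the minimality of $n$. Suppose $R$ is not strong. Since a singleton tournament is strong vacuously, the minimal unobtainable size satisfies $n \geq 2$, so $R$ is nontrivial, and Proposition \ref{gameprop02}(a) lets us write $R = S' \rhd S$, where $S'$ is a tournament on a set $J'$ and $S$ is a tournament on a proper invariant subset $J = [n] \setminus J'$. Writing $n_1 = |J'|$ and $n_2 = |J|$, both are at least $1$ and both are strictly smaller than $n$, because $J$ is a proper nonempty invariant subset.

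Next I would invoke the minimality of $n$. Since $n$ is the least size for which some tournament fails to be modeled by an appropriate partition with this fixed $N$, every tournament of size $n_1$ and every tournament of size $n_2$ can be so modeled. Using the relabeling remark at the start of Section \ref{sec3}, I may therefore choose an $n_1$ partition $\B$ of $[N n_1]$ with $R[\B]$ isomorphic to $S'$ and an $n_2$ partition $\A$ of $[N n_2]$ with $R[\A]$ isomorphic to $S$. (When $n_1$ or $n_2$ equals $1$, the corresponding piece is modeled by $\O_N$.)

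The heart of the argument is to glue these two partitions. Forming the domination product $\B \rhd \A$ of Definition \ref{partdef00aaa} yields an $n$ partition of $[N n]$, and Theorem \ref{parttheo00bbb} gives $R[\B \rhd \A] = R[\B]' \rhd R[\A]$. Here the upper block $\B$, shifted above all of $\A$, produces the dominating part while $\A$ produces the invariant part, so $R[\B \rhd \A]$ is a domination product of a copy of $S'$ over a copy of $S$, hence isomorphic to $S' \rhd S = R$. Applying the relabeling remark once more, $R$ itself is the tournament of an $n$ partition of $[N n]$, contradicting the hypothesis that $R$ cannot be modeled. Therefore $R$ must be strong.

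The step I expect to require the most care is the bookkeeping in the gluing: matching the orientation of the domination product so that the dominating tournament $S'$ is carried by the upper block $\B$ and the invariant tournament $S$ by $\A$, keeping the single parameter $N$ fixed across both sub-partitions, and confirming that no ties are introduced between the two blocks. This last point is automatic, since $Q(A_{n+p}, A_q) = N^2 > 0$ for the cross pairs, so the constructed digraph is genuinely a tournament equal to $R$ even without assuming $N$ odd.
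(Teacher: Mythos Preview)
Your proof is correct and follows essentially the same route as the paper: decompose a non-strong $R$ via Proposition \ref{gameprop02}(a), model the two smaller pieces by minimality, and glue them with the partition domination product using Theorem \ref{parttheo00bbb}. Your write-up is in fact a bit more careful than the paper's, explicitly handling the relabeling needed to pass between isomorphic tournaments and noting that the cross-block $Q$-values are $N^2>0$ so no ties arise.
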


\begin{proof} Assume that every tournament of size $k < n$ can be modeled using a $k$ partition of $[Nk]$. If $R$ is a tournament on $[n]$ which is
not strong, then by Proposition \ref{gameprop02}, it can be written as the domination product of two tournaments of smaller size. So by
Theorem \ref{parttheo00bbb} it can be modeled as the domination production of some $k_1$ partition of $[Nk_1]$ and a $k_2$ partition of
$[Nk_2]$ with $k_1, k_2$ positive integers such that $k_1 + k_2 = n$. Hence, $R$ can be modeled by  an $n$ partition of $[Nn]$.

\end{proof}
\vspace{.5cm}

In this section we will consider examples of tournaments on $[n]$ which can be modeled by using $n$ partitions of $[3n]$, i.e. with $N = 3$.

For $\pi$ a permutation of $[n]$ and $\A = \{ A_1, \dots, A_n \} $  an $n$ partition of $[Nn]$, define $\A^{\pi} = \{ A^{\pi}_1, \dots, A^{\pi}_n \} $ by
\begin{equation}\label{cyceq04}
A^{\pi}_i \ = \ A_{\pi^{-1}i},
\end{equation}
for $i \in [n]$, so that $Q(A^{\pi}_i,A^{\pi}_j) = Q(A_{\pi^{-1}i},A_{\pi^{-1}j})$ from which it follows that
$i \to j$ in $R[\A^{\pi}]$ if and only if $\pi^{-1}i \to \pi^{-1}j$ in $R[\A]$.  Thus,
\begin{equation}\label{cyceq05}
R[\A^{\pi}] \ = \ \pi R[\A].
\end{equation}

It follows that if $R[\A]$ is isomorphic to a tournament $R$ on $[n]$, then $R[A^{\pi}] = R$ for some permutation $\pi$ of $[n]$.

On the other hand, if  $\pi$ is a permutation of $[Nn]$ we can define $\pi(\A) = \{ \pi(A_1), \dots, \pi(A_n) \} $
with $\pi(A_p) = \{ \pi(j) : j \in A_p \}$, the image of $A_p$ by the map $\pi$. If we start with $\A$ an arbitrary $n$ partition of $[Nn]$,
%
then by varying the permutation $\pi$ we can
obtain any $n$ partition of $[Nn]$.

%
%
%
%
%
%
%
%

For $k = 1, \dots M-1$ call the transposition $(k, k+1)$ on $[M]$ a \emph{simple transposition}\index{simple transposition}.

\begin{lem}\label{cyclem04} With $M \geq 2$, every permutation on $[M]$ is a product of simple transpositions. \end{lem}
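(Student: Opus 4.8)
The plan is to induct on $M$, realizing the ``sorting'' intuition that an arbitrary permutation can be turned into the identity by a sequence of adjacent swaps. Throughout write $s_k = (k,k+1)$ for the simple transpositions, and note that each $s_k$ is its own inverse, so that $(s_{a_1}\cdots s_{a_r})^{-1} = s_{a_r}\cdots s_{a_1}$ is again a product of simple transpositions; the class of such products is thus closed under inversion, and it suffices to produce $\pi$ itself from simple transpositions.

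For the base case $M = 2$ the two permutations of $[2]$ are the identity, which is the empty product, and $s_1 = (1,2)$. For the inductive step, suppose the claim holds on $[M-1]$ and let $\pi$ be a permutation of $[M]$. Viewing $\pi$ as the word $\pi(1),\dots,\pi(M)$, right multiplication by $s_i$ interchanges the entries in positions $i$ and $i+1$, since $(\pi s_i)(i) = \pi(i+1)$ and $(\pi s_i)(i+1) = \pi(i)$. Let $k = \pi^{-1}(M)$ be the position holding the value $M$ and set $\tau = s_k s_{k+1}\cdots s_{M-1}$. Right multiplication by $\tau$ slides the entry $M$ one position to the right at each successive step, so that $\pi\tau$ carries $M$ to $M$.

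Because $\pi\tau$ fixes $M$, it restricts to a permutation of $[M-1]$, which by the inductive hypothesis equals a product $\rho$ of the simple transpositions $s_1,\dots,s_{M-2}$; these fix $M$ and so are simple transpositions on $[M]$ as well. Thus $\pi\tau = \rho$, whence $\pi = \rho\,\tau^{-1}$. Since $\tau^{-1} = s_{M-1}\cdots s_k$ is a product of simple transpositions, so is $\pi$, completing the induction.

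The only point requiring care is the bookkeeping in the inductive step: one must confirm that the chain $\tau = s_k\cdots s_{M-1}$ really transports the value $M$ to the last position without permanently disturbing the remaining entries, so that $\pi\tau$ genuinely restricts to a permutation of $[M-1]$. This is routine once the composition convention is fixed, and it is essentially the only place where an error could creep in. A parallel alternative would be to invoke the standard fact that every permutation is a product of transpositions and then decompose each transposition $(i,j)$ with $i<j$ into simple ones by the conjugation $(i,j) = (s_i\cdots s_{j-2})\,s_{j-1}\,(s_{j-2}\cdots s_i)$; I prefer the inductive sorting argument above, as it is entirely self-contained.
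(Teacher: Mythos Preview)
Your proof is correct and follows essentially the same approach as the paper's: both argue by induction on $M$, first using a chain of adjacent transpositions to arrange that $M$ sits in the last position, and then invoking the inductive hypothesis on the restriction to $[M-1]$. The only cosmetic difference is that the paper composes on the left (raising the value in position $M$ step by step to $M$) while you compose on the right (sliding the value $M$ to position $M$); the underlying idea is identical.
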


\begin{proof}  For a permutation $\pi$ we show that a product of simple transpositions applied to the sequence $\pi(1), \dots, \pi(M)$
transforms the sequence to the identity sequence $1, \dots, M$.

We first use induction on $k$ with $\pi(M) = M - k$, to obtain a sequence of simple transpositions after which the sequence terminates at $M$.

If $k = 0$, then no transpositions are necessary.

If $0 < k \leq M - 1$, apply the transposition $(M-k,M-k+1) = (M-k,M-(k-1))$ after which the sequence terminates at $M - (k-1)$. Now use the inductive
hypothesis to transform so that the sequence terminates at $M$.

For a permutation $\pi$ we show by induction on $M$ that a product of simple transpositions applied to the sequence $\pi(1), \dots, \pi(M)$
transforms the sequence to $1, \dots, M$.  This is trivial for $M = 2$.  Now assume $M > 2$.

By our first result we may assume that $\pi(M) = M$. Now
$\pi(1), \dots,$ $ \pi(M-1)$ is a permutation of $[M-1]$ and so by  inductive hypothesis, it can be transformed to the identity by
a sequence of simple transpositions on $[M-1]$.

\end{proof}
\vspace{.5cm}

Begin with an arbitrary   $n$ partition of $[Nn]$,  $\A = \{ A_1, \dots, A_n \} $ and $k \in [Nn - 1]$.  Assume that
$k \in A_{p_1}, k+1 \in A_{p_2}$ with $p_1 \not= p_2$. If $\pi$ is the simple transposition $(k,k+1)$, then
  \begin{align}\label{cyceq06}
    \begin{split}
    \pi(A_{p_1}) \ = \ &(A_{p_1} \setminus \{ k \}) \cup \{ k+1 \}, \\
    \pi(A_{p_2}) \ = \ &(A_{p_2} \setminus \{ k+1 \}) \cup \{ k \}, \\
    \pi(A_p) \ = \ &A_p \quad \text{for} \ p \not= p_1, p_2.
 \end{split}
 \end{align}

For $j \in [Nn]$ with $j \not= k, k+1$ it is clear that  $k > j$ if and only if $k+1 > j$. So it follows that
  \begin{align}\label{cyceq07}
    \begin{split}
    Q(\pi(A_p),\pi(A_q)) \ = \ &Q(A_p,A_q) \quad \text{if} \ \{ p, q \} \not= \{ p_1, p_2 \}, \\
    Q(\pi(A_{p_1}),\pi(A_{p_2})) \ = \ &Q(A_{p_1},A_{p_2}) + 2.
 \end{split}
 \end{align}

 Thus, either $R[\pi(\A)] = R[\A]$ or the only possible changes are
 \begin{itemize}
 \item[(i)] the reversal of the edge from
 $A_{p_2}$ to $A_{p_1}$, which occurs if \\ $Q(A_{p_2},A_{p_1}) = 1$,

 \item[(ii)] the elimination of the edge from
 $A_{p_2}$ to $A_{p_1}$, which occurs if $Q(A_{p_2},A_{p_1}) = 2$,

 \item[(iii)] the introduction of a new edge $A_{p_1}$ to $A_{p_2}$, which occurs if $Q(A_{p_2},A_{p_1}) = 0$.
 \end{itemize}
Notice that if $N$ is odd, then by (\ref{parteq11ccc}), cases (ii) and (iii) cannot occur.

 Call this operation a \emph{simple switch}\index{simple switch}.  From Lemma \ref{cyclem04}  it follows that we can get from any $n$ partition of $[Nn]$
 to any other by a sequence of simple switches.

 If $\A = \{ A_1, \dots, A_n \} $ is an $n$ partition of $[3n]$ we write $A_i = \{a^i_1 < a^i_2 < a^i_3 \}$ calling $a^i_s$ the \emph{level $s$
 element} for $s = 1, 2, 3$.  We call $\A$ a \emph{stratified partition}\index{partition!stratified} if
 \begin{equation}\label{cyceq08}
\{ a^i_s : i \in [n] \}    \ = \ \{ n(s-1) + j : j \in [n] \}, \qquad \text{for} \ s = 1, 2, 3.
\end{equation}
That is, the level one elements are $\{1, \dots, n \}$ and the level two elements are $\{n+1, \dots, 2n \}$ so that the
level three elements are $\{2n+1, \dots, 3n \}$.

 \begin{theo}\label{cyctheo05} If $\B = \{ B_1, \dots, B_n \} $ is an  $n$ partition of $[3n]$, then there
 exists $\A = \{ A_1, \dots, A_n \} $ a
 stratified $n$ partition of $[3n]$ with $R[\A] = R[\B]$. \end{theo}

 \begin{proof} As before label $B_i = \{ b^i_1 < b^i_2 < b^i_3 \}$. Notice that for $i, j \in [n]$
   \begin{align}\label{cyceq09}
    \begin{split}
  b^i_2  >  b^j_3 \ \text{or} \  b^i_1   >  b^j_3& \qquad \Longrightarrow \qquad Q(A_i,A_j)  \geq  2 \cdot 6 - 9 = 3, \\
    b^i_1 \ > \ b^j_2 \qquad &\Longrightarrow \qquad Q(A_i,A_j) \geq 2 \cdot 6 - 9 = 3.
 \end{split}
 \end{align}

 We first prove that by using a sequence of simple switches we can obtain $\{ b^i_3 : i \in [n] \}    \ = \ \{ 2n + j : j \in [n] \}$.
 If we let $m_3 = \min \{ b^i_3 : i \in [n] \}$ then this is equivalent to $m_3 = 2n+1$. Observe that
 always $m_3 \leq 2n+1 $ since $\{ b^i_3 : i \in [n] \} $ consists of $n$ distinct integers with maximum $3n$.
 We use induction assuming that $m_3 = 2n+1-r$. If $r = 0$ then there is nothing to prove.

 Assume that $r \geq 1$. Among the $n + r$ numbers in the interval $[2n+1-r,3n]$ some are level two elements and perhaps  some are even from level one.
 Let $k+1$ be the smallest such so that each of the numbers in the interval $[2n+1-r,k]$ is from level three.
 In particular, $k = b^j_3$ with $k+1 = b^i_s$ for $s = 2$ or $1$.
 If we do a simple switch of $k$ with $k+1$, to obtain $\bar B_i, \bar B_j$ then $k = \bar b^i_s$ and $k+1 = \bar b^j_3$.  Furthermore,
 (\ref{cyceq07})  implies that $Q(\bar B_i,\bar B_j) = Q(B_i,B_j) - 2$ which is still positive by (\ref{cyceq09}). As this is the only change in the
 $Q$ values it follows that $R[\bar \B] = R[\B]$. We continue down, switching until finally, we obtain $\B$ with $2n+1-r = b^i_p$ and with
 $m_3 = 2n+1-r+1 = 2n+1-(r-1)$. Now apply the induction hypothesis.

 Now assuming that $\B$ satisfies  $\{ b^i_3 : i \in [n] \}    \ = \ \{ 2n + j : j \in [n] \}$, we let
 $m_2 = \min \{ b^i_2 : i \in [n] \} = n+1-r$. If $r = 0$, then $\{ b^i_2 : i \in [n] \}    \ = \ \{ n + j : j \in [n] \}$ and so
 $\{ b^i_1 : i \in [n] \}    \ = \ \{ j : j \in [n] \}$.  As above, $m_2 \leq n+1$ and we are done if $r = 0$.

  Assume that $r \geq 1$. Among the $n + r$ numbers in the interval $[n+1-r,2n]$ some are from level one.  Let $k+1$ be the smallest
  such so that each of the numbers in $[n+1-r,k]$ is from level two. In particular, $k = b^j_2$ with $k+1 = b^i_1$. As before we do a sequence
  of switches to get $\bar B$ with $\bar b^i_1 = n+1-r$ and with $m_2 = n+1-r+1 = n+1-(r-1)$. Applying the induction hypothesis we arrive at $\A$.

  \end{proof}
  \vspace{.5cm}

  The main result of this section is the observation that for arbitrary $n$ there is a game of size $2n + 1$ which can be modeled
  using  a $2n+1$ partition of $[3(2n+1)]$, that is, with $N = 3$.

\begin{theo}\label{cyctheo07} Let $R$ be the group game on $\Z_{2n+1}$ with game subset $[n]$.  That is, for $p, q \in \Z_{2n+1}$ $p \to q$ if and only if
$q - p $ (mod $2n+1$) lies in $[n]$. There exists $\A = \{ A_0, A_1, \dots, A_{2n} \} $ a proper, stratified $2n+1$ partition of $[3(2n+1)]$
with $R[\A] = R$.  That is,
 \begin{equation}\label{cyceq12}
 A_p \to A_q \quad \Longleftrightarrow \quad q-p \in [n] \ \text{mod} \ 2n+1.
 \end{equation}
 \end{theo}

 \begin{proof} Notice that for convenience of the algebraic description we are labelling the elements of the
 partition by $\Z_{2n+1} = \{ 0, 1, \dots, 2n \}$ rather than
 by $[2n+1] = \{ 1, 2, \dots, 2n+1 \}$ .

Define $\A = \{ A_0, A_1, \dots, A_{2n} \} $  the stratified $2n+1$ partition of $[3(2n+1)]$ by (with $j = 1, \dots n$)
\begin{align}\label{cyceq11}
    \begin{split}
   A_0 \ = \ &\{ 2n+1, 3n+2, 4n+3 \}, \\
   A_j \ = \ &\{ 2n+1-j, 3n+2-j, 4n+3+2j  \}, \\
   A_n \ = \ &\{ n+1, 2n+2, 6n+3  \}, \\
   A_{n+1} \ = \ &\{ n, 4n+2, 4n+4  \}, \\
   A_{n+j} \ = \ &\{ n+1-j, 4n+3-j, 4n+2+2j \}, \\
   A_{2n} \ = \ &\{ 1, 3n+3, 6n+2  \}, \\
    \end{split}
    \end{align}

Note first that $\s(A_p) = 9n + 6$ for all $p$ and so the partition is proper.

 Observe that for $p < q \in [0,n]$
 \begin{equation}\label{cyceq13}
a^q_1 < a^p_1 < a^q_2 < a^p_2.
\end{equation}
Thus, $A^-_{p} \twoheadrightarrow A^-_{q}$.

Similarly, for $p < q \in [1,n]$
 \begin{equation}\label{cyceq14}
a^{n+q}_1 < a^{n+p}_1 < a^{n+q}_2 < a^{n+p}_2.
\end{equation}
So $A^-_{n+p} \twoheadrightarrow A^-_{n+q}$.

From (\ref{parteq14}) Case (ii) and (\ref{parteq13a}) it follows that
\begin{align}\label{cyceq15}
    \begin{split}
    Q(A_p,A_q) \ = \ &1 \quad \text{for} \ p < q \in [0,n], \\
     Q(A_{n+p},A_{n+q}) \ = \ &1 \quad \text{for} \ p < q \in [n].
        \end{split}
    \end{align}

    Next note that
     \begin{equation}\label{cyceq16}
a^{n+p}_1  <  a^{q}_1 <  a^q_2 <  a^{n+p}_2
\end{equation}
for $p \in [n], q \in [0,n]$. Thus, each $A^-_q \hookrightarrow A^-_{n+p}$.  Furthermore,
\begin{align}\label{cyceq17aaa}
    \begin{split}
    a^j_3 > a^{n+j}_3 > a^{n+p}_3  \quad &\text{for} \ 1 \leq p \leq j, \\
    a^{n+j}_3 > a^{j-1}_3 > a^q_3  \quad &\text{for} \ 0 \leq q \leq j-1.
          \end{split}
    \end{align}
     From (\ref{parteq14}) Case (ii) and (\ref{parteq13a}) it follows that, for $j \in [n]$
\begin{align}\label{cyceq17}
    \begin{split}
      Q(A_j,A_{n+p}) \ = \ &1 \quad \text{for} \ 1 \leq p \leq j, \\
     Q(A_{n+j},A_{q}) \ = \ &1 \quad \text{for} \ 0 \leq q \leq j-1.
        \end{split}
    \end{align}

   The relations of (\ref{cyceq12}) follow from (\ref{cyceq15})  and (\ref{cyceq17}).

\end{proof}

In particular, with $n = 2$ we obtain the unique game of size $5$ via
\begin{align}\label{cyceq18}
    \begin{split}
   A_0 \ = \ &\{ 5, 8, 11 \}, \\
   A_1 \ = \ &\{ 4, 7, 13  \}, \\
   A_2\ = \ &\{ 3, 6, 15  \}, \\
   A_3 \ = \ &\{ 2, 10, 12  \}, \\
    A_4 \ = \ &\{ 1, 9, 14  \}, \\
    \end{split}
    \end{align}

    Even among proper, stratified partitions this representation is not unique.
    For example we can also obtain the game of size $5$ via
   \begin{align}\label{cyceq19}
    \begin{split}
   A_0 \ = \ &\{ 5, 7, 12 \}, \\
   A_1 \ = \ &\{ 4, 6, 14  \}, \\
   A_2\ = \ &\{ 3, 10, 11  \}, \\
   A_3 \ = \ &\{ 2, 9, 13  \}, \\
    A_4 \ = \ &\{ 1, 8, 15  \}, \\
    \end{split}
    \end{align}

    Using these and suitable simple switches one can show that every tournament on $[n]$ with $n \leq 5$ can be modeled using
    an $n$ partition of $[3n]$. By Proposition \ref{cycprop01} one need only consider strong tournaments with $2 < n \leq 5$.

There are three isomorphism classes of games of size seven, labeled Type I, II and III in Section 10 of \cite{A1}.
All three can be obtained using  $7$ partitions of $[21]$.
The Type I game
is given by Theorem \ref{cyctheo07} with $n = 3$:
\begin{align}\label{cyceq20}
    \begin{split}
   A_0 \ = \ &\{ 7, 11, 15 \}, \\
   A_1 \ = \ &\{ 6, 10, 17 \}, \\
   A_2 \ = \ &\{ 5, 9, 19  \}, \\
   A_3 \ = \ &\{ 4, 8, 21 \}, \\
   A_4 \ = \ &\{ 3, 14, 16 \}, \\
   A_5 \ = \ &\{ 2, 13, 18  \}, \\
    A_6 \ = \ &\{ 1, 12, 20  \}.
    \end{split}
    \end{align}

The game of Type II can be obtained via the following proper, stratified $7$ partition of $21$:
\begin{align}\label{cyceq21}
    \begin{split}
   A_0 \ = \ &\{ 7, 9, 17 \}, \\
   A_1 \ = \ &\{ 6, 12, 15 \}, \\
   A_2 \ = \ &\{ 5, 8, 20  \}, \\
   A_3 \ = \ &\{ 4, 11, 18 \}, \\
   A_4 \ = \ &\{ 3, 14, 16 \}, \\
   A_5 \ = \ &\{ 2, 10, 21  \}, \\
    A_6 \ = \ &\{ 1, 13, 19  \}.
    \end{split}
    \end{align}
%
%

    \begin{align}\label{cyceq22}
    \begin{split}
    A_0 \to A_1, A_2, A_4; \quad &A_1 \to A_2, A_3, A_5; \\
     A_2 \to A_3, A_4, A_6; \quad &A_3 \to A_0, A_4, A_5; \\
      A_4 \to A_1, A_5, A_6; \quad &A_5 \to A_0, A_2, A_6; \\
       A_6 \to A_0, &A_1, A_3.
        \end{split}
    \end{align}
    Each with a $Q$ value of $1$. This is group game on $\Z_{7}$ with game subset $\{ 1, 2, 4 \}$.

    By doing a $1,2$ simple switch we reverse the arrow $A_5 \to A_6$.
     By doing a $10,11$ simple switch we reverse the arrow $A_3 \to A_5$.
      By doing a $18,19$ simple switch we reverse the arrow $A_6 \to A_3$. Together these three simple switches reverse the
      $3$-cycle $\langle A_3, A_5, A_6 \rangle$. This yields a Type III game which is not a group game. The result is still
      a stratified,  proper partition.

 \vspace{1cm}

\bibliographystyle{amsplain}

\printindex

\end{document}